\theoremstyle{plain}
\newtheorem*{theorem*}{Theorem}
\newtheorem{theorem}{Theorem}[section]
\newtheorem{lemma}[theorem]{Lemma}
\newtheorem{proposition}[theorem]{Proposition}
\newtheorem{corollary}[theorem]{Corollary}
\newtheorem*{question*}{Question}
\newtheorem*{questions*}{Questions}
\newtheorem*{problem*}{Problem}
\theoremstyle{definition}
\newtheorem{definition}{Definition}[section]
\newtheorem{remark}{Remark}
\newtheorem*{example*}{Example}
\newcommand{\BS}[1][N]{\mathrm{BS}(1,#1)}
\def\htop{h_{\mathrm{top}}}
\title{Subshifts and colorings on ascending HNN-extensions of finitely generated abelian groups}
\date{\today}
\author{Eduardo Silva}
\email{eduardo.silva@ug.uchile.cl}
\keywords{symbolic dynamics, ascending HNN-extension, groups, Baumslag-Solitar, subshifts of finite type, Cayley graphs, weak periodicity, frozen colorings, topological entropy}
\begin{document}

\begin{abstract}
For an ascending HNN-extension $G*_{\psi}$ of a finitely generated abelian group $G$, we study how a synchronization between the geometry of the group and weak periodicity of a configuration in $\mathcal{A}^{G*_{\psi}}$ forces global constraints on it, as well as in subshifts containing it. A particular case are Baumslag-Solitar groups $\BS$, $N\ge2$, for which our results imply that a $\BS$-SFT which contains a configuration with period $a^{N^\ell}$, $\ell\ge 0$, must contain a strongly periodic configuration with monochromatic $\mathbb{Z}$-sections. Then we study proper $n$-colorings, $n\ge 3$, of the (right) Cayley graph of $\BS$, estimating the entropy of the associated subshift together with its mixing properties. We prove that $\BS$ admits a frozen $n$-coloring if and only if $n=3$. We finally suggest generalizations of the latter results to $n$-colorings of ascending HNN-extensions of finitely generated abelian groups.
\end{abstract}	
\maketitle 	


\section{Introduction}
\label{section:introduction}
The main object studied by symbolic dynamics on finitely generated groups are $G$-\textit{subshifts}, that is, closed subsets of the product space $\mathcal{A}^G$, for $\mathcal{A}$ a finite discrete alphabet and $G$ a finitely generated group, which are invariant under the action $\sigma$ of $G$ in $\mathcal{A}^G$, called the \textit{shift} and defined by
$$\sigma_g(x)_h=x_{g^{-1}h}\text{ for }x\in\mathcal{A}^G,\ g,h\in G.$$
An important family of subshifts are \textit{subshifts of finite type (SFT)}, which are those that can be described completely by a finite set of forbidden patterns.

Throughout the last decades there has been interest in the cases $G=\mathbb{Z}^d$, $d\ge 2$, where a more complex structure arises in comparison with the one-dimensional case $G=\mathbb{Z}$: questions which have been long answered in the case $d=1$ become significantly more difficult for $d\ge 2$, sometimes changing its veracity, or remaining as open problems to this day. An example of this contrast is the so called Domino problem, which asks whether, given a finite set of forbidden patterns, the corresponding SFT is empty or not. For $d=1$ this problem is decidable, meanwhile for $d\ge 2$ it is undecidable.

In recent years interest has grown in studying symbolic dynamics on other finitely generated groups, obtaining results in a general context as well as in more particular settings, such as free groups or Baumslag-Solitar groups.

Given $m,n\in \mathbb{Z}\backslash \{0\}$, we define the corresponding \textit{Baumslag-Solitar group} as given by its standard presentation
$$
\mathrm{BS}(m,n)\coloneqq \langle a,b \mid ba^mb^{-1}=a^n \rangle.
$$
These groups were first introduced (though their origin might be older) in \cite{baumslag_solitar_1962} by G. Baumslag and D. Solitar, in order to provide an example of a non-Hopfian group with two generators and one relator, namely, one which is isomorphic to one of its (proper) quotient groups. Since then these groups have gained attention in the fields of combinatorial group theory and geometric group theory as examples and counterexamples of different properties \cite{harpe_2003,meskin_1972}, and more recently in the field of symbolic dynamics \cite{aubrun_kari_2013,cyr2016distortion,esnay2020weakly}.

The subfamily of Baumslag-Solitar groups $\BS$, $N\ge 2$, is of particular interest, since it exhibits nice properties in contrast with the rest of Baumslag-Solitar groups. This subfamily actually comprises all the cases for which the Baumslag-Solitar group is solvable, and hence amenable, while still being non-abelian (the case $N=1$ gives $\mathrm{BS}(1,1)\cong \mathbb{Z}^2$). In particular, this allows us to find a F\o lner sequence which permits us to define a notion of topological entropy for $\BS$-subshifts. Actually, the above holds in a more general framework: ascending HNN-extensions of finitely generated groups, which will be our object of study throughout this paper. 

In section \ref{section:preliminaries} we introduce the basic concepts and notation used throughout this work. We define the basic objects of symbolic dynamics and different notions of periodicity, show how Baumslag-Solitar groups arise as HNN-extensions of $\mathbb{Z}$ and explain the advantages of restricting ourselves to the case of ascending HNN-extensions $G*_{\psi}$ of a finitely generated abelian group $G$. Finally we describe the structure of the Cayley graph of $G*_{\psi}$, which is formed by copies of $G$, which we call $G$-sections, connected between them by the stable letter associated to the HNN-extension.

In section \ref{section:weak_periodicity} we show how the geometry of $G*_{\psi}$ forces some rigidity for configurations in the full $G*_{\psi}$-shift which exhibit weak periodicity in the direction of a particular subgroup. Alternatively, how a synchronization between the stabilizer of a configuration and the map defining the HNN-extension forces rigidity in the global structure of the point. We show that a periodicity of
$\psi^{\ell}(H)$, for $H\leqslant G$ and $\ell\ge 0$, forces sufficiently high $G$-sections to be $p$-periodic (Theorem \ref{prop:ascending HNN extension periodicity}). Moreover, we show how having such a configuration affects a $G*_{\psi}$-subshift which contains it (Theorem \ref{thm: strong H periodicity HNN extensions in subshifts and SFTs}).

\begin{theorem}\label{thm:summary_weak_periodicity}
	Let $x\in \mathcal{A}^{G*_{\psi}}$ and suppose there exists a subgroup $H\leqslant G$ with $\psi(H)\leqslant H$ and $\ell\ge 0$ such that $\psi^{\ell}(H)\leqslant \mathrm{Stab}(x)$. Then
	\begin{enumerate}
		\item all $G$-sections of level greater or equal than $\ell$ are $H$-periodic,
		\item if $X$ is a $G*_{\psi}$-subshift which contains the configuration $x$, then there exists $y\in X$ for which all $G$-sections are $H$-periodic, and 
		\item if $X$ is further assumed to be an SFT and $H=G$, then $y$ can be chosen to be strongly periodic.
	\end{enumerate}
\end{theorem}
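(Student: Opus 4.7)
For part (1), let $t$ denote the stable letter of the HNN-extension, so that $tgt^{-1}=\psi(g)$ and hence $t^n g t^{-n}=\psi^n(g)$ for $g\in G$, $n\geq 0$. A $G$-section at level $n\geq 0$ can be parametrized by $G$ in the form $g \mapsto w\psi^n(g)t^n$ for some representative $w\in G$ at height $0$. Under this parametrization, the shift by $h\in H$ on the section corresponds to left multiplication in $G*_{\psi}$ by $\psi^n(h)$: indeed, using abelianness of $G$,
\begin{equation*}
\psi^n(h)\cdot w\psi^n(g)t^n \;=\; w\psi^n(h)\psi^n(g)t^n \;=\; w\psi^n(hg)t^n.
\end{equation*}
Combined with $\psi(H)\leq H$, which yields $\psi^n(H)\leq\psi^\ell(H)$ for $n\geq \ell$, the hypothesis $\psi^\ell(H)\leq\mathrm{Stab}(x)$ gives $\psi^n(H)\leq\mathrm{Stab}(x)$ for all $n\geq\ell$; this is exactly the $H$-periodicity of the section at level $n$.

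For part (2), I would pass to a limit. Set $x_n\coloneqq \sigma_{t^{-n}}(x)$ for $n\geq 0$. Since $\mathrm{Stab}(x_n)=t^{-n}\mathrm{Stab}(x)t^n$ and $t^n\psi^m(H)t^{-n}=\psi^{m+n}(H)$, the previous analysis shows that a $G$-section of $x_n$ at level $m$ is $H$-periodic whenever $m+n\geq \ell$. In particular, at any fixed level $m$, the corresponding sections of $x_n$ are eventually $H$-periodic. By compactness of $\mathcal{A}^{G*_{\psi}}$ and closedness of $X$, extract a subsequential limit $y\in X$. Each equality of the form $y_{\psi^m(h)k}=y_k$ witnessing $H$-periodicity of a section of $y$ involves only two coordinates, and is already satisfied by $x_{n_k}$ for all sufficiently large $k$, so it passes to $y$ by pointwise convergence.

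For part (3), assume $X$ is an SFT and $H=G$. By (2) there exists $y\in X$ constant on each $G$-section; hence $y$ is completely determined by its height sequence $(c_n)_{n\in\mathbb{Z}}\in\mathcal{A}^{\mathbb{Z}}$ defined by $y_k=c_{\varphi(k)}$, where $\varphi\colon G*_{\psi}\to\mathbb{Z}$ is the height homomorphism. Since $X$ has finitely many forbidden patterns, each of bounded support, they translate to finitely many forbidden words for $(c_n)$, so the set of admissible height sequences forms a nonempty one-dimensional SFT $Y\subseteq\mathcal{A}^{\mathbb{Z}}$. By the classical pigeonhole argument, $Y$ contains a periodic sequence $(c'_n)$ of some period $p$; lifting gives $y'\in X$ constant on sections with $G\leq\mathrm{Stab}(y')$ and $t^p\in\mathrm{Stab}(y')$. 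Thus $\mathrm{Stab}(y')\supseteq\langle G,t^p\rangle$, which has index $p$ in $G*_{\psi}$ (it contains the kernel of $\varphi$ and projects onto $p\mathbb{Z}$), so $y'$ is strongly periodic.

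The main obstacle lies in part (1): choosing the right parametrization of sections so that the $H$-action translates into multiplication by $\psi^n(H)$, which is what allows the decreasing condition $\psi^n(H)\leq\psi^\ell(H)\leq\mathrm{Stab}(x)$ to deliver $H$-periodicity at every level $n\geq\ell$. Once this synchronization is set up, part (2) is a compactness and limit argument, and part (3) reduces to the classical existence of periodic points in nonempty one-dimensional SFTs.
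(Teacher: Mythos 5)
Your parts (1) and (2) follow essentially the paper's route (commute $\psi^{n}(h)$ to the left using $t^{n}g=\psi^{n}(g)t^{n}$ and abelianness, observe $\psi^{n}(H)\leqslant\psi^{\ell}(H)\leqslant\mathrm{Stab}(x)$ for $n\geq\ell$, then take a limit point of $\sigma_{t^{-n}}(x)$). One imprecision in (1): a $G$-section of level $n\geq 0$ has the form $\Gamma_{t^{-j}gt^{i}}$ with $i-j=n$ and possibly $j>0$, so the base point $w$ in your parametrization $g\mapsto w\psi^{n}(g)t^{n}$ lies in the level-zero subgroup $\bigcup_{j\geq0}t^{-j}Gt^{j}$, not in $G$. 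The computation survives (push $\psi^{n}(h)$ through $t^{-j}$, commute inside $G$, push back --- this is exactly the paper's manipulation of $t^{-j}gt^{i+j+\ell}h$), but as written your argument only treats the sections in the sheet of the identity.

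The genuine gap is in part (3), at the step ``hence $y$ is completely determined by its height sequence.'' Part (2) only gives that each $G$-section of $y$ is monochromatic; this does \emph{not} imply that two distinct $G$-sections at the same level carry the same symbol, since monochromaticity is a per-section condition and puts no relation between disjoint sections (one can easily build configurations with all sections monochromatic that are not functions of the height). Your whole reduction --- the nonemptiness of the one-dimensional SFT $Y$ of admissible height sequences --- rests on exhibiting at least one point of $X$ that is constant on each \emph{level}, so this step cannot be waved through. The paper supplies the missing ingredient as the second half of Proposition \ref{prop:ascending HNN extension periodicity}: when $H=G$, the $\psi^{\ell}(G)$-periodicity of $x$ forces all $G$-sections of the same level lying above a common $G$-section of level at least $\ell$ to share their symbol; since any two same-level sections merge after finitely many $t^{-1}$-steps, this property passes through the limit defining $y$ and gives exactly the level-determinacy you assume. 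With that added, the rest of your part (3) is fine and is in substance the paper's pigeonhole-and-gluing argument, repackaged as the existence of periodic points in a nonempty one-dimensional SFT; your identification of $\mathrm{Stab}(y')$ as a finite-index subgroup containing $\ker\varphi$ and $t^{p}$ is correct.
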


In particular for the Baumslag-Solitar group $\BS$, $N\ge 2$, Theorem \ref{thm:summary_weak_periodicity} implies that any configuration $x\in \mathcal{A}^{\BS}$ such that $\sigma_{a^{pN^\ell}}(x)=x$ has sufficiently high $p$-periodic $\mathbb{Z}$-sections. Moreover, any $\BS$-subshift $X$ containing such a configuration contains one in which all $\mathbb{Z}$-sections are $p$-periodic, and if $X$ is an SFT and $p=1$, then it contains a strongly periodic configuration with monochromatic $\mathbb{Z}$-sections.

Given a group $G$ generated by a finite subset $S\subseteq G$ it is interesting to study the dynamical properties of proper colorings (in the sense used commonly in graph theory) of its Cayley graph. We define for $n\ge 2$ the \textit{graph-coloring subshift} (GCS)
$$
\mathcal{C}_{n}\coloneqq\left\{x\in \{0,\ldots,n-1\}^G\mid \text{ for all } g\in G, s\in S: \ x_{g}\neq x_{gs} \right\},
$$
that is, each configuration $x\in \mathcal{C}_{n}$ describes a proper $n$-coloring of the Cayley graph $\Gamma(G,S)$ of $G$ with respect to the generator $S$. 

Proper $n$-colorings of $G=\mathbb{Z}^2$, or more generally $G=\mathbb{Z}^d$ for $d\ge 2$, have been studied recently \cite{alon2019mixing, peled2018rigidity, ray2020proper}. In particular, in \cite{alon2019mixing} the authors study mixing properties for colorings of $\mathbb{Z}^d$, $d\ge 2$, introducing the notion of a frozen $n$-coloring of $\mathbb{Z}^d$ as an n-coloring which cannot be modified on a finite subset to create a different coloring, and prove that $\mathbb{Z}^d$ admits frozen $n$-colorings if and only if $2\le n\le d+1$.

In section \ref{section:graph_coloring_subshifts} we consider the GCS $\mathcal{C}_n$ in $\BS$ and, among other results regarding extensibility of patterns and mixing properties, we show explicit constructions of frozen $3$-colorings in $\BS$ (Theorem \ref{thm:existence_frozen_3}), and then prove the lack of frozen $n$-colorings for every $n\ge 4$ (Theorem \ref{thm:no_frozen_n_ge_4}).

\begin{theorem}\label{thm:summary_gcs_frozen_colorings}
	For $n\ge 3$, the GCS $\mathcal{C}_n\subseteq\{0,\ldots,n-1\}^{\BS}$ contains a frozen coloring if and only if $n=3$.
\end{theorem}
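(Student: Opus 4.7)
The statement is the conjunction of an existence result (for $n=3$) and a non-existence result (for $n\geq 4$), and I would address each direction separately.

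For existence, I would construct an explicit frozen $3$-coloring of the Cayley graph of $\BS$ and verify its frozenness. Using the semidirect product description $\BS\cong\mathbb{Z}[1/N]\rtimes\mathbb{Z}$, with each element written as a pair $(r,m)$, a natural candidate when $\gcd(N,3)=1$ is $x_{(r,m)}\coloneqq(\bar r+m)\bmod 3$, where $\bar r$ is the well-defined reduction of $r\in\mathbb{Z}[1/N]$ modulo $3$ (which exists because $N$ is invertible mod $3$). Properness is immediate: the color changes by $N^m\not\equiv 0\pmod 3$ along $a$-edges and by $1$ along $b$-edges. The crucial additional property is that at every vertex the four neighbors realize both colors distinct from the vertex's own color, so single-vertex flips are impossible. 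To upgrade this to frozenness against arbitrary finite modifications, I would consider a candidate $y\in\mathcal{C}_3$ with finite disagreement set from $x$, set $d\coloneqq y-x:\BS\to\mathbb{Z}/3$, translate the ``$y$ is proper'' conditions into local constraints on $d$ along each edge type (in particular $d_{gb}-d_g\in\{0,1\}\pmod 3$), and argue by a telescoping/lifting argument along the bi-infinite $b$-lines, coupled with the $a$-edge constraints across different levels, that $d$ must vanish identically. The case $3\mid N$ requires a different construction, since $\bar r$ is no longer defined; a reasonable candidate is a section-by-section periodic pattern with carefully chosen phase shifts between consecutive levels.

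For non-existence with $n\geq 4$, given any $x\in\mathcal{C}_n$ I would produce a finite set $F\subseteq\BS$ and a coloring $y\in\mathcal{C}_n$ with $y\neq x$ and $y|_{\BS\setminus F}=x|_{\BS\setminus F}$. The case $n\geq 6$ is immediate by pigeonhole: the four neighbors of any vertex use at most four colors, so some $c\neq x_g$ is unused among them and one flips $x_g$ to $c$. For $n\in\{4,5\}$ the combinatorics is tighter; I would exploit the short cycles in the Cayley graph coming from the defining relation $bab^{-1}a^{-N}=e$ (giving cycles of length $N+3$) and their higher-level analogues $b^{m}ab^{-m}a^{-N^m}=e$, combined with the branching structure under which each $\mathbb{Z}$-section has $N$ child sections above it via $b$-edges, to locate a finite region whose boundary coloring is compatible with more than one coloring of the interior, thereby producing $y$.

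The main obstacle will be the non-existence argument at $n=4$: here the four-regularity of the Cayley graph equals the number of colors distinct from $x_g$, so the coloring can be locally rigid at every vertex and no single-vertex or short-cycle flip is automatic. I expect the argument to require exploiting the infinite $b$-edge branching at sufficiently high levels, where the abundance of ``fresh'' child sections accumulates enough degrees of freedom to realize a modification compatible with the fixed lower-level constraints.
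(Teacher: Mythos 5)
Your decomposition (explicit construction for $n=3$, non-existence for $n\ge 4$) matches the paper's, and your candidate coloring for $\gcd(N,3)=1$ is essentially the paper's: writing $g=b^{-j}a^kb^i$ in normal form, the paper takes $x_g=2(i-j)+k\ (\mathrm{mod}\ 3)$ or $x_g=(i-j)+k\ (\mathrm{mod}\ 3)$ according to $N\ (\mathrm{mod}\ 3)$, and verifies frozenness not by a telescoping argument on $d=y-x$ but by a short extremal argument: pick $g$ in the finite difference set $F$ extremizing a suitable linear functional of the normal-form exponents, so that two of its neighbors lie outside $F$ and already carry both colors distinct from $x_g$, forcing $y_g=x_g$. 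Your lifting argument for $d$ can be made to work, but it is not written out, and more importantly the case $3\mid N$ is left unconstructed: there $\bar r$ does not exist, and the paper records a genuine obstruction to your ``phase-shifted periodic'' idea --- any coloring $3$-periodic in $k$ along rows is $\sigma_{a^3}$-invariant, hence $\sigma_{a^N}$-invariant, and Proposition \ref{prop:ascending HNN extension periodicity} then forces monochromatic rows, contradicting properness. The paper's fix is $x_g=(k\ \mathrm{mod}\ 2)+2(i-j)\ (\mathrm{mod}\ 3)$, which is $2$-periodic along rows and still admits the extremal argument. Without an explicit candidate and a frozenness proof here, the $n=3$ direction is incomplete for a third of the values of $N$.

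The larger gap is the non-existence direction for $n\in\{4,5\}$. Your pigeonhole disposes only of $n\ge 6$ (for $n=5$ the four neighbors of a vertex can still exhaust all four colors other than $x_g$), and for $n\in\{4,5\}$ you offer only a speculative plan via short cycles and branching, explicitly flagging $n=4$ as unresolved. The missing ingredient is Proposition \ref{prop:nofroz_graph} (Proposition~2.2 of Alon et al.): a graph of maximum degree $\Delta$ and edge-isoperimetric constant $i_e(\Gamma)$ admits no frozen $n$-coloring for $n>\frac12\Delta+\frac12 i_e(\Gamma)+1$. The paper computes $i_e(\Gamma)=0$ using the F\o lner rectangles $R_m$ (boundary of size $2\frac{N^{m+1}-1}{N-1}$ against $|R_m|=mN^m$), so all $n\ge 4$ are excluded at once. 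Your ``abundance of fresh child sections at high levels'' intuition is amenability in disguise, but without the isoperimetric lemma it does not constitute a proof, and a local cycle-flip argument is unlikely to close the case $n=4$ precisely because, as you observe, the graph is $4$-regular there.
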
	

Another question of interest for $n$-colorings of a group $G$ is to estimate in how many different ways one can (properly) color a finite set, that is, the amount of patterns of $\mathcal{C}_n$ with a fixed (finite) support. The answer to this question, in turn, allows us to give estimates for the topological entropy of $\mathcal{C}_n$. This is a rather difficult question even when restricting the attention to $\mathbb{Z}^2$ and in general there are no closed formulas for any of these numbers, except for some exceptions as is the case of topological entropy of $3$-colorings of $\mathbb{Z}^2$, known to be $\frac{3}{2}\log\frac{4}{3}$ \cite{Lieb:1967zz}.

The last results of this section provide bounds for the topological entropy of the GCS $\mathcal{C}_n$ in $\BS$ (Theorem \ref{thm:gcs_entropy_estimates}), as well as showing that the topological entropy of $\mathcal{C}_3$ is positive (Theorem \ref{thm:C_3_has_positive_entropy}).
\begin{theorem} \label{thm:summary_gcs_entropy}
	For $n\ge 3$, the GCS $\mathcal{C}_n\subseteq\{0,\ldots,n-1\}^{\BS}$ satisfies
	
	$$
	\log(n-2)\le\htop(\mathcal{C}_n)\le\log(n-1).
	$$
	We also have that $\htop(\mathcal{C}_3)>0$. In fact, we further have that
	\begin{enumerate}
		\item if $N$ is odd then $\htop(\mathcal{C}_3)\ge\frac{1}{2}\log 2$, and
		\item if $N$ is even then $\htop(\mathcal{C}_3)\ge\frac{1}{2(N^2+1)}\log 2$.
	\end{enumerate}
\end{theorem}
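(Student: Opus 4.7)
My plan is to treat the upper bound, the general lower bound, and the two subcases of positive entropy for $\mathcal{C}_3$ in turn, with the main difficulty being the case $N$ even.

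The upper bound $\htop(\mathcal{C}_n)\le\log(n-1)$ is standard: for any connected Følner sequence $(F_m)$ (which exists since $\BS$ is amenable), counting proper $n$-colorings of the induced subgraph on $F_m$ along a spanning tree gives $|\mathcal{L}_{F_m}(\mathcal{C}_n)|\le n(n-1)^{|F_m|-1}$, and normalizing by $|F_m|$ yields $\log(n-1)$ in the limit.

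For the lower bound $\htop(\mathcal{C}_n)\ge\log(n-2)$, the key observation I would establish is that every finite subgraph of the Cayley graph of $\BS$ is $2$-degenerate. Working in the parametrization $\BS\cong\mathbb{Z}[1/N]\rtimes\mathbb{Z}$, for any finite $F\subseteq\BS$ the vertex $v=(x_0,k_0)\in F$ with $k_0$ maximal and $x_0$ maximal at level $k_0$ has both $(x_0+N^{k_0},k_0)$ and $(x_0,k_0+1)$ outside $F$, so $v$ has at most $2$ neighbors in $F$. Peeling off such vertices produces an ordering of $F_m$ in which every vertex has at most $2$ earlier neighbors, and a greedy count yields at least $n(n-2)^{|F_m|-1}$ proper colorings; an analogous peeling along a nested exhaustion $F_m\subset F_{m+1}\subset\cdots$ extends each coloring to a configuration of $\mathcal{C}_n$. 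The delicate point is ensuring that the extension step succeeds for $n\ge 3$ at each stage without reducing the count by an exponential factor.

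For positive entropy when $N$ is odd, the assignment $\phi(a)=\phi(b)=1$ defines a homomorphism $\phi:\BS\to\mathbb{Z}/2\mathbb{Z}$ (the relator evaluates to $N+3\equiv 0\pmod 2$ precisely when $N$ is odd), so the Cayley graph of $\BS$ is bipartite. Starting from the proper $2$-coloring $x^{(0)}_g=\phi(g)$, every vertex with $\phi(g)=0$ can be independently repainted with color $2$ without breaking properness, since all four of its neighbors are colored $1$ and no two $0$-vertices are adjacent. By the Følner property $|F_m\cap\phi^{-1}(0)|/|F_m|\to 1/2$, this produces $2^{|F_m|/2}$ distinct configurations and gives $\htop(\mathcal{C}_3)\ge\frac{1}{2}\log 2$.

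The case $N$ even is the main obstacle, since the Cayley graph is no longer bipartite (the relator yields an odd cycle of length $N+3$). My plan is to construct an explicit proper $3$-coloring $x^{(0)}\in\mathcal{C}_3$ that is $a$-periodic with period $2(2N+1)$ and in which, within each fundamental block, a positive density of vertices is \emph{toggleable}, meaning their four neighbors all share a single color so the central vertex admits two independent color choices. If such toggleable vertices are arranged into an independent set, flipping them independently yields $2^{|F_m|/(2(2N+1))}$ distinct proper $3$-colorings in a Følner set $F_m$. The crux, and the main difficulty I anticipate, is exhibiting the specific coloring: the period $2(2N+1)$ presumably arises as the minimal amount of $a$-translation needed to simultaneously reconcile the $a$-edge constraints within each level and the $b$-edge constraints across levels in the presence of the odd relator.
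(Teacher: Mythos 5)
Your treatment of the upper bound and of the odd-$N$ case is correct and essentially matches the paper. The spanning-tree count for $\htop(\mathcal{C}_n)\le\log(n-1)$ is a slightly cleaner route than the paper's, which instead bounds $|\mathcal{L}_{R_{m+1}}(\mathcal{C}_n)|$ in terms of $|\mathcal{L}_{R_m}(\mathcal{C}_n)|$ and solves the resulting inequality $\htop\le\frac{1}{N}\htop+\frac{N-1}{N}\log(n-1)$; both are valid. For $N$ odd, your homomorphism $\phi:\BS\to\mathbb{Z}/2\mathbb{Z}$ and the free repainting of one bipartition class is exactly the paper's argument. For the lower bound $\log(n-2)$, your $2$-degeneracy observation is the same idea the paper uses (coloring the rectangle $R_m$ level by level from the top so that each vertex has at most two colored neighbors); but note that the extension step you flag as delicate is genuinely needed and is not resolved by ``reversing the peeling'' of $F_{m+1}$, since that peeling order interleaves vertices of $F_m$ with new vertices. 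The paper resolves it by working with the rectangles $R_m$ and proving separately (Lemma \ref{lem:gcs_extend_to_rectangles} and Proposition \ref{prop:gcs_rectangle_extension}) that any locally admissible coloring of $R_m$ extends to $R_{m+1}$ by coloring the new vertices level by level from the bottom; you would need to supply an analogous convexity/extension lemma for whatever exhaustion you use.

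The genuine gap is the case $N$ even. There the entire content of the bound $\htop(\mathcal{C}_3)\ge\frac{1}{2(2N+1)}\log 2$ is the explicit coloring with a positive density of independently toggleable vertices, and you state the strategy but explicitly defer the construction (``the crux \dots is exhibiting the specific coloring''). Identifying the right shape of the answer is not the same as proving it: one must actually produce a pattern on $R_{2m}$ that is simultaneously proper along every $a$-row (where consecutive vertices at level $i$ differ by $a^{N^i}$) and along every $b$-edge, and in which the toggleable vertices form an independent set of density $\frac{1}{2(2N+1)}$. The paper does this concretely: the base row and all even levels carry the periodic word $\left(0(12)^{2N}\right)^{\infty}$, while each odd level carries blocks of length $2N+1$ of the form $\alpha\,0\,(12)^{2N-2}$ positioned so that the single free symbol $\alpha\in\{1,2\}$ in each block has all four neighbors colored $0$; counting one free symbol per block per odd level yields $2^{m\lfloor N^{2m}/(2N+1)\rfloor}$ patterns on $R_{2m}$ and hence the stated bound. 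Without this (or some equivalent) explicit construction, your argument for even $N$ is a plan rather than a proof.
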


Finally in Section \ref{section: further questions} we make some comments and questions about possible generalizations of the above results to the case of $n$-colorings of general ascending HNN-extensions of finitely generated abelian groups.

\section{Preliminaries}\label{section:preliminaries}

\subsection{Symbolic dynamics on finitely generated groups}
Let $G$ be a finitely generated group and $\mathcal{A}$ be a finite (discrete) set. Given $F\subseteq G$ finite, we call and element $p\in \mathcal{A}^{F}$ a \textit{pattern}, $F$ its \textit{support}, and use the notation $\mathrm{supp}(p)\coloneqq F$. Given patterns $p,q$, we say that $p$ is a subpattern of $q$, denoted $p\sqsubseteq q$, if there exists $g\in G$ such that $p=q|_{g\cdot \mathrm{supp}(p)}$. An element $x\in \mathcal{A}^G$ is called a \textit{configuration} or \textit{coloring}, and we use the notation $x_g\coloneqq x(g)$, $g\in G$. The set $\mathcal{A}^G$ can be endowed with the product topology, which is metrizable, compact and has a base given by the \textit{cylinders}, that is, sets of the form
$$
[p]=\{x\in \mathcal{A}^G\mid x|_{\mathrm{supp}(p)}=p \}, \ p\text{ a pattern}.
$$

The topological space $\mathcal{A}^G$ together with the left $G$-action by homeomorphisms
\begin{align*}
\sigma:G&\to \mathrm{Homeo}(\mathcal{A}^G,\mathcal{A}^G)\\
g&\mapsto \sigma_g,
\end{align*}
where for every $x\in \mathcal{A}^G$, $g,h\in G$:
$$
\sigma_g(x)_h=x_{g^{-1}h},
$$
is called the \textit{full $G$-shift} and $\sigma$ is called the \textit{shift} map. Associated to this action, we define for a configuration $x\in \mathcal{A}^G$ its \textit{orbit} $$\mathrm{Orb}(x)\coloneqq\{\sigma_g(x)\mid g\in G \},$$ and its \textit{stabilizer} $$\mathrm{Stab}(x)\coloneqq \{g\in G\mid \sigma_g(x)=x \}.$$ The latter of these sets is in fact a subgroup of $G$, and we have the following relation: $|\mathrm{Orb}(x)|=|G:\mathrm{Stab}(x)|$. A configuration $x\in \mathcal{A}^G$ is said to be \textit{weakly periodic} if $\mathrm{Stab}(x)$ is not the trivial subgroup $\{e_G\}$, and \textit{strongly periodic} if $|\mathrm{Orb}(x)|<\infty$.

A subset $X\subseteq \mathcal{A}^G$ is called a \textit{$G$-subshift} if it is closed and $\sigma$-invariant, that is, for every $g\in G$: $\sigma_g(X)=X$. Equivalently, $X$ is a \textit{$G$-subshift} if there exists a family $\mathcal{F}$ of (\textit{forbidden}) patterns such that
$$
X=\{x\in \mathcal{A}^G\mid \text{ for every finite subset }F\subseteq G:\ \sigma_g(x)|_{F}\notin \mathcal{F}, \text{ for every }g\in G \}.
$$ 
If such a family can be chosen to be finite we say that $X$ is a $G$-subshift of finite type (SFT).

Given a $G$-subshift $X\subseteq \mathcal{A}^G$ described by a family $\mathcal{F}$ of (forbidden) patterns, we say that a pattern $p$ is \textit{locally admissible} if $p$ does not contain elements of $\mathcal{F}$ as subpatterns, and \textit{globally admissible} if there exists an $x\in X$ such that $x|_{\mathrm{supp}(p)}=p$.

We say that a countable group $G$ is \textit{amenable} if there exists a sequence of finite subsets $F_n\subseteq G$, $n\in \mathbb{N}$, such that for any $g\in G$:
$$
\lim_{n\to \infty}\frac{|gF_n\triangle F_n|}{|F_n|}=0.
$$
We say that $\{F_n\}_{n\in \mathbb{N}}$ is a \textit{F\o lner sequence} in $G$. For our purposes, amenability will be important because it will allow us to generalize the notion we have for topological entropy of $\mathbb{Z}$-subshifts to more general groups. See \cite[Chapter 9]{Loh_geometric} and \cite[Chapter 4]{Cellular_automata_and_groups} for further details about amenability.

In the case where $G$ is amenable, we fix a F\o lner sequence $\{F_n\}_{n\in \mathbb{N}}$ and we define the \textit{topological entropy} of a $G$-subshift $X$ as
$$
\htop(X)\coloneqq \lim_{n\to \infty}\frac{\log|\mathcal{L}_{F_n}(X)|}{|F_n|},
$$
where  $\mathcal{L}_{F_n}(X)=\{x|_{F_n}\mid x\in X\}$. This limit always exists and it does not depend on the F\o lner sequence chosen \cite[Theorem~4.38]{Kerr2016}. Intuitively, the topological entropy of a $G$-subshift $X$ represents the rate of exponential growth at which the amount of patterns with support $F_n$ we see in the elements of $X$ increases as these sets become more and more invariant.


\subsection{HNN-extensions and Baumslag-Solitar groups}
Consider a group $G$ and two isomorphic subgroups $H,K\le G$. The HNN-extension\footnote{The letters ``HNN'' stands for Higman, Neumann and Neumann, who introduced this kind of extension in \cite{HNN49}.} $G*_{\psi}$ defined below is intuitively a form of constructing a group which contains a copy of $G$, on which $H$ and $K$ are conjugate subgroups.

\begin{definition}\label{definition.hnn_extension} Consider a group $G$ with a presentation $\langle S\left|\right. R\rangle$ and two isomorphic subgroups $H,K\le G$, with $\psi:K\to H$ an isomorphism. We define the \textit{HNN-extension} of $G$ with respect to $\psi$ by the presentation
	$$
	G*_{\psi}\coloneqq \langle S\cup\{t\} \left|\right. R\cup \{t k t^{-1}=\psi(k): k\in K \} \rangle.
	$$
	We will say that $G$ is the \textit{base group}, and that $H$ and $K$ are the subgroups conjugated in $G*_{\psi}$. We also call the generator $t$ the \textit{stable letter}.
\end{definition}
Thanks to the relations of $G*_{\psi}$, for every $k\in K$ and $h\in H$ we have that $tk=\psi(k)t$ and $\psi^{-1}(h)t^{-1}=t^{-1}h$, which allows us to choose in which order the elements of $G$ appear with respect to $t$ or $t^{-1}$. By using this property, we can find a rather simple normal form for the HNN-extension $G*_{\psi}$, as the next proposition shows.

\begin{proposition}[Britton's Lemma {\cite[Theorem~IV.2.1]{lyndon_schupp_1977}}] \label{prop:hnn_general_normal_form}Let $G$ be a group, $H,K\le G$ and $\psi:K\to H$ an isomorphism. 
	Choose classes of representatives $T_H$ and $T_K$ of the right cosets for $H$ and $K$ in $G$, respectively, such that $T_H$ and $T_K$ both contain the identity element $e_G$. Then the HNN-extension $G*_\psi$ has a normal form given by the set of words of the form $g_0t^{\varepsilon_1}g_1t^{\varepsilon_2}\ldots t^{\varepsilon_n}g_n$ for $n\ge 0$, $g_0\in G$ and $\varepsilon_i\in \{+1,-1\}$ for $i=1,\ldots,n$, such that
	\begin{enumerate}
		\item $\varepsilon_i=1 $ implies $g_i\in T_K$,
		\item $\varepsilon_i=-1$ implies $g_i\in T_H$ and 
		\item there is no subword of the form $t^\varepsilon e_G t^{-\varepsilon}$, for $\varepsilon\in\{+1,-1\}$.
	\end{enumerate} 
\end{proposition}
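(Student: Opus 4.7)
The plan is to follow the classical strategy of van der Waerden: establish existence of the normal form by a rewriting procedure, and then uniqueness by constructing a faithful action of $G*_{\psi}$ on the set of formal normal-form words. Throughout, the two basic moves are the defining relations $tk = \psi(k)t$ for $k \in K$ and $t^{-1}h = \psi^{-1}(h)t^{-1}$ for $h \in H$.

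For existence, start from an arbitrary representative $w = g_0 t^{\varepsilon_1} g_1 \cdots t^{\varepsilon_n} g_n$ with $g_i \in G$ and $\varepsilon_i \in \{\pm 1\}$. I would sweep from right to left: at index $i$ going from $n$ down to $1$, decompose $g_i = k_i \cdot t_K^{(i)}$ with $k_i \in K$, $t_K^{(i)} \in T_K$ when $\varepsilon_i = +1$, and use $t k_i = \psi(k_i) t$ to absorb $\psi(k_i)$ into $g_{i-1}$, leaving $t_K^{(i)}$ in slot $i$; the case $\varepsilon_i = -1$ is symmetric via $T_H$ and $\psi^{-1}$. Since the modifications propagate only leftward into slots that have not yet been processed, conditions (1) and (2) hold after one sweep. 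If any pinch $t^{\varepsilon} e_G t^{-\varepsilon}$ is present, delete it; this strictly decreases the number of $t$-letters, and we restart the sweep. The process terminates in finitely many iterations.

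For uniqueness --- which I expect to be the main obstacle --- I would use the standard Schreier / van-der-Waerden device. Let $\mathcal{N}$ denote the set of all formal sequences $(g_0, \varepsilon_1, g_1, \ldots, \varepsilon_n, g_n)$ satisfying (1)-(3). I would define permutations $\pi_g \in \mathrm{Sym}(\mathcal{N})$ for each $g \in G$ by left-multiplication on the leading slot, together with $\pi_t, \pi_{t^{-1}} \in \mathrm{Sym}(\mathcal{N})$ as follows: on $w = (g_0, \varepsilon_1, g_1, \ldots) \in \mathcal{N}$, decompose $g_0 = k \cdot t_K$ with $k \in K$, $t_K \in T_K$; if $n = 0$, $t_K \neq e_G$, or $\varepsilon_1 \neq -1$, set $\pi_t(w) = (\psi(k), +1, t_K, \varepsilon_1, g_1, \ldots)$; otherwise a prospective pinch $t e_G t^{-1}$ is avoided by setting $\pi_t(w) = (\psi(k) g_1, \varepsilon_2, g_2, \ldots)$. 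The rule for $\pi_{t^{-1}}$ is symmetric, using $g_0 = h \cdot t_H$ with $h \in H$, $t_H \in T_H$.

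The crux is then to verify that these permutations respect every defining relation of $G*_{\psi}$. The relations of the base group $G$ hold trivially since $\pi_g$ acts only on the initial slot, so the substantive work is to check that $\pi_t \pi_{t^{-1}} = \pi_{t^{-1}} \pi_t = \mathrm{id}$ and that $\pi_t \pi_k \pi_{t^{-1}} = \pi_{\psi(k)}$ for every $k \in K$; both proceed by a case analysis on whether a pinch is created or cancelled, and I expect this bookkeeping, together with checking that every branch lands back in $\mathcal{N}$, to be the principal technical obstacle. Once the relations are verified, we obtain a well-defined homomorphism $\rho \colon G*_{\psi} \to \mathrm{Sym}(\mathcal{N})$, and applying $\rho(w)$ to the trivial sequence $(e_G) \in \mathcal{N}$ returns precisely the normal form produced by the existence argument. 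Consequently two distinct normal-form words act differently on $(e_G)$ and hence represent different elements of $G*_{\psi}$, giving uniqueness.
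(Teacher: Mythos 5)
The paper does not actually prove this proposition; it is quoted as a known result from Lyndon and Schupp, Theorem~IV.2.1. Your outline --- existence by a right-to-left rewriting sweep followed by pinch removal, and uniqueness via the van der Waerden permutation representation on the set $\mathcal{N}$ of normal forms --- is precisely the standard argument given in that reference, and it is correct: the case analysis you defer (that $\pi_t$ and $\pi_{t^{-1}}$ are mutually inverse bijections and that $\pi_t\pi_k\pi_{t^{-1}}=\pi_{\psi(k)}$ for $k\in K$) does close up, using the uniqueness of the decompositions $g=k\,t_K$ and $g=h\,t_H$ together with condition (3) of the normal form to rule out spurious pinches in each branch.
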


An important example of HNN-extensions are \textit{Baumslag-Solitar groups}: given non-zero integers $m,n\in \mathbb{Z}\backslash\{0\}$ we define the group 
$$\mathrm{BS}(m,n)=\langle a,b \left|\right. ba^mb^{-1}=a^n\rangle,$$ 
which is an HNN-extension of $\mathbb{Z}$ via the isomorphism $\psi:\langle a^m \rangle\to \langle a^n\rangle$ defined by $\psi(a^m)=a^n$. When $|m|=1$, these groups form part of a special class of HNN-extensions:
\begin{definition}\label{def:ascending hnn extension} Given a group $G$ and a monomorphism $\psi:G\to G$, we say that $G*_{\psi}$ is an \textit{ascending HNN-extension}.
\end{definition}

Throughout the rest of this work we center our attention in ascending HNN-extensions $G*_{\psi}$ having a finitely generated abelian base group $G$, on which the monomorphism $\psi$ is not surjective. That is, the image $\psi(G)$ is a proper subgroup of $G$. The main example for our results are the Baumslag-Solitar groups $\mathrm{BS}(1,N)$ for $N\ge 2$.

Restricting ourselves to this subfamily of HNN-extensions has many advantages: we have useful commuting relations between the elements of the group, a simpler normal form (in contrast to the general one given by Britton's lemma) and a simpler understanding of the geometric structure of the Cayley graph of the group, all of which we describe in further detail below.

First we establish the following lemma, which summarizes some of the consequences induced by the defining relations used in the standard presentation of $G*_{\psi}$. Its elementary proof is by induction and is omitted.

\begin{lemma}\label{lemma:ascending_hnn_further_identifications}
	Under the above hypotheses and the presentation of $G*_{\psi}$ from Definition \ref{definition.hnn_extension}, for $g\in G$ and $j\ge 0$ we have $	t^jg=\psi^j(g)t^j,$ where $\psi^j$ denotes the composition of $\psi$ with itself $j$ times.
\end{lemma}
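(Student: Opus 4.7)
The plan is to prove the identity $t^j g = \psi^j(g)\,t^j$ by induction on $j \ge 0$, using as the sole nontrivial ingredient the defining relation of the ascending HNN-extension. Since $G*_\psi$ is ascending with $\psi \colon G \to G$ a monomorphism, the associated subgroups in Definition \ref{definition.hnn_extension} are $K = G$ and $H = \psi(G)$, so the relation $tkt^{-1} = \psi(k)$ holds for every $k \in G$. Rewriting this as
\[
t g = \psi(g)\,t \quad \text{for all } g \in G,
\]
gives the $j = 1$ case and is the only external fact I need.

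For the base case $j = 0$, both sides equal $g$, since $\psi^0 = \mathrm{id}_G$ and $t^0 = e_{G*_\psi}$. For the inductive step, assume $t^j g = \psi^j(g)\,t^j$ for some $j \ge 0$. Then, noting that $\psi^j(g) \in G$ so the $j=1$ relation applies to it, I compute
\[
t^{j+1} g \;=\; t\bigl(t^j g\bigr) \;=\; t\bigl(\psi^j(g)\,t^j\bigr) \;=\; \bigl(t\,\psi^j(g)\bigr)\,t^j \;=\; \psi\bigl(\psi^j(g)\bigr)\,t\cdot t^j \;=\; \psi^{j+1}(g)\,t^{j+1},
\]
which closes the induction.

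There is no real obstacle here: the lemma is a direct iteration of the HNN-extension relation, and the ascending assumption is exactly what guarantees that the relation $tg = \psi(g)t$ is available for every $g \in G$ (rather than only for $g$ in a proper subgroup $K \lneq G$), so no coset bookkeeping of the form required by Britton's lemma (Proposition \ref{prop:hnn_general_normal_form}) is needed. This is why the author omits the argument.
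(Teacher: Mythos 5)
Your proof is correct and matches the paper's (omitted) argument exactly: the paper states only that ``its elementary proof is by induction,'' and your induction on $j$ using the defining relation $tg=\psi(g)t$ (valid for all $g\in G$ since the extension is ascending, so $K=G$) is precisely that argument. Your closing remark correctly identifies why no Britton-style coset bookkeeping is needed here.
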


\begin{proposition}\label{prop:normalform_ascending_hnn}
	Let $G$ be a finitely generated abelian group, $\psi:G\to G$ a monomorphism. The ascending HNN-extension $G*_{\psi}$ has a normal form given by words of the form $t^{-j}gt^{i}$, where $i,j\ge 0$, $g\in G$, and $g\in \psi(G)$ is possible only if $i=0$ or $j=0$.
\end{proposition}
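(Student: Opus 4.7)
The plan is to deduce this from Britton's normal form (Proposition \ref{prop:hnn_general_normal_form}), specialized to the ascending abelian setting. Two features of this setting drive the simplification: since $K=G$, the system $T_K$ of right-coset representatives of $K$ in $G$ reduces to $\{e_G\}$; and since $G$ is abelian, any product of $G$-elements conjugated by various powers of $\psi$ collapses to a single element of $G$.

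First I would specialize Britton's form. In any word $g_0 t^{\varepsilon_1} g_1 \cdots t^{\varepsilon_n} g_n$ satisfying conditions (1)--(3) of Proposition \ref{prop:hnn_general_normal_form}, condition (1) forces $g_i = e_G$ whenever $\varepsilon_i = +1$, because $T_K = \{e_G\}$. Combined with the no-pinch condition (3), this forbids any subword $t \cdot e_G \cdot t^{-1}$, so a sign $+1$ can never be directly followed by a $-1$. Therefore the signs must form a (possibly empty) block of $-1$'s followed by a (possibly empty) block of $+1$'s, and the word reduces to
\[
w = g_0 t^{-1} g_1 t^{-1} \cdots t^{-1} g_j t^i,
\]
with $g_0 \in G$, $g_1, \dots, g_j \in T_H$, and $g_j \neq e_G$ whenever both $i > 0$ and $j > 0$ (otherwise one would obtain a pinch $t^{-1} e_G t$ at the switching point).

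Next I would use the identity $g t^{-1} = t^{-1} \psi(g)$, valid for every $g \in G$ (a rearrangement of Lemma \ref{lemma:ascending_hnn_further_identifications} for $j=1$), to push all the $G$-entries leftward across the $t^{-1}$-block. Iterating this identity and using the abelianness of $G$ to combine the resulting factors yields
\[
g_0 t^{-1} g_1 t^{-1} \cdots t^{-1} g_j = t^{-j} \bigl( \psi^{j}(g_0)\, \psi^{j-1}(g_1) \cdots \psi(g_{j-1})\, g_j \bigr),
\]
so that $w$ takes the desired form $t^{-j} g t^{i}$ with $g$ the element in parentheses.

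Finally I would verify the constraint on $g$. When $j \ge 1$, every factor in the product defining $g$ other than $g_j$ lies in $\psi(G)$, so $g \in \psi(G)$ if and only if $g_j \in \psi(G)$. Since $g_j \in T_H$ and $T_H \cap \psi(G) = \{e_G\}$, this forces $g_j = e_G$, and by the refined Britton condition from the first step this requires $i = 0$. Uniqueness transfers from Britton's normal form by reversing the construction: given $t^{-j} g t^i$ one recovers $g_j$ as the unique $T_H$-representative of the coset $g\,\psi(G)$ and proceeds inductively on $j$. The main obstacle I expect is the sign-block analysis in the first step; once that bookkeeping is tidied up, the abelian collapse and the coset-representative check are routine.
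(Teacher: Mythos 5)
Your proposal is correct and follows essentially the same route as the paper: specialize Britton's lemma with $T_K=\{e_G\}$, observe that the exponent signs must be a block of $-1$'s followed by a block of $+1$'s, collapse the $G$-entries across the $t^{-1}$-block using $gt^{-1}=t^{-1}\psi(g)$ and commutativity, and use the no-pinch condition to derive the constraint $g\in\psi(G)\Rightarrow i=0$ or $j=0$. Your sign-block analysis and the explicit coset-representative check are in fact slightly more complete than the paper's version, which simply enumerates the three resulting word types without justifying that they are exhaustive.
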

\begin{proof}
	Using the normal form for HNN-extensions described in Proposition \ref{prop:hnn_general_normal_form} with  $T_K=\{e_{G}\}$ and $T_H$ a set of representatives of $\psi(G)$, we distinguish three types of words.
	\begin{enumerate}[1.]
		\item The first type are words of the form $gt^i$, $g\in G$, $i\ge 0$. This is already a word of the form described in the proposition.
		\item The second type are words of the form $
		g_0t^{-1}g_1\cdots t^{-1}g_n,$ where $ g_0\in G, \ g_1,\ldots,g_n\in T_H.$
		We will prove that we can arrive at a word of the form described in the proposition by induction. Moreover, we will show that there exists $\tilde{g}\in G$ such that
		$$
		g_0t^{-1}g_1\cdots t^{-1}g_n=t^{-n}\psi(\tilde{g})g_n.
		$$\\
		
		If $n=1$, then $g_0t^{-1}g_1=t^{-1}\psi(g_0)g_1$.\\
		
		In the general case, for $n\ge 2$, the induction hypothesis gives us the existence if $\tilde{g}\in G$ such that
		$$
		g_0t^{-1}g_1\cdots t^{-1}g_{n-1}=t^{-(n-1)}\psi(\tilde{g})g_{n-1}.
		$$
		Then
		\begin{align*}
		g_0t^{-1}g_1\cdots t^{-1}g_{n-1}t^{-1}g_n&=t^{-(n-1)}\psi(\tilde{g})g_{n-1}t^{-1}g_n\\
		&=t^{-(n-1)}\psi(\tilde{g})t^{-1}\psi(g_{n-1})g_n\\
		&=t^{-n}\psi(\psi(\tilde{g})g_{n-1})g_n,
		\end{align*}
		finishing the induction.
		\item Finally, the third possible type of words are of the form $
		g_0t^{-1}g_1\cdots t^{-1}g_nt^{j}, \text{ where} \ g_0\in G, \ g_1,\ldots,g_n\in T_H, \ j\ge 1.
		$
		As in the previous case, it is possible to prove that there exists $\tilde{g}\in G$ such that
		$$
		g_0t^{-1}g_1\cdots t^{-1}g_nt^{j}=t^{-n}\psi(\tilde{g})g_nt^j.
		$$
		Note that as $j\ge 1$, the third condition from Proposition \ref{prop:hnn_general_normal_form} ensures that $g_n\neq e_G$ (since otherwise the considered word would contain a subword of the form $t^{-1}e_Gt$), and hence $\psi(\tilde{g})g_n\notin \psi(G)$.
	\end{enumerate}
\end{proof}

An ascending HNN-extension $G*_{\psi}$ whose base group $G$ is a finitely generated abelian group is amenable. We exhibit an explicit F\o lner sequence in the case of Baumslag-Solitar groups, which will be useful in the study of graph-coloring subshifts below. In particular, we will use this sequence to compute topological entropy in $\BS$-subshifts.

\begin{proposition}\label{prop:rectangles_and_folner} Consider the Baumslag-Solitar group $\BS$, $N\ge 2$. Define for $m\ge 1$ the \textit{rectangle of height $m$}
	$$
	R_m\coloneqq \{a^kb^i\mid 0\le k<N^m, \ 0\le i<m  \}\subseteq \BS.
	$$
	Then $\{R_m\}_{m\ge 1}$ forms a F\o lner sequence in $\BS$.
\end{proposition}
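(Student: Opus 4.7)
The plan is to verify the F\o lner condition $|R_m \triangle R_m s| = o(|R_m|)$ for each generator $s \in \{a^{\pm 1}, b^{\pm 1}\}$ and then extend to arbitrary $g \in \BS$ by a standard triangle-inequality argument on word length. First I would confirm that $|R_m| = mN^m$ using the normal form from Proposition \ref{prop:normalform_ascending_hnn}, which guarantees that the expressions $a^k b^i$ with $0 \le k < N^m$ and $0 \le i < m$ are pairwise distinct elements of $\BS$.

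The key algebraic input is Lemma \ref{lemma:ascending_hnn_further_identifications}, which in $\BS$ gives $b^i a = a^{N^i} b^i$; hence $b^i a^{-1} = a^{-N^i} b^i$, and conjugating the defining relation yields $a^k b^{-1} = b^{-1} a^{Nk}$. From these identities the translates are: $R_m b = \{a^k b^{i+1} : 0 \le k < N^m,\ 0 \le i < m\}$; $R_m b^{-1}$ consists of the rows $i = 0, \ldots, m-2$ of $R_m$ together with $\{b^{-1} a^{Nk} : 0 \le k < N^m\}$; and $R_m a^{\pm 1} = \bigcup_{i=0}^{m-1} \{a^{k \pm N^i} b^i : 0 \le k < N^m\}$. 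For $s \in \{b, b^{-1}\}$ the symmetric difference with $R_m$ consists of exactly two rows of width $N^m$, so $|R_m \triangle R_m s| = 2N^m$ and the ratio is $2/m$. For $s \in \{a, a^{-1}\}$, at each height $i$ the $a$-coordinate shifts by $\pm N^i$, contributing $2N^i$ elements to the symmetric difference; summing gives $|R_m \triangle R_m s| = 2(N^m - 1)/(N-1) = O(N^m)$, so the ratio is at most $2/((N-1)m)$.

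The mildly delicate point is the estimate for $s = a^{\pm 1}$. The horizontal shift $N^i$ at the $i$-th row grows geometrically, and at the top row equals $N^{m-1}$, which is comparable to the rectangle's full width $N^m$, so upper rows are almost disjoint from their translates. Nevertheless, the geometric sum $\sum_{i=0}^{m-1} 2N^i$ is dominated by its largest term and is $O(N^m)$, while $|R_m| = mN^m$; the factor $m$ coming from the rectangle's height is precisely what produces the $o(1)$ ratio, and is the reason the height of $R_m$ is taken to grow rather than staying bounded. Finally, the F\o lner condition for arbitrary $g = s_1 \cdots s_n \in \BS$ follows from the generator case by the triangle inequality for symmetric differences and the right-invariance of cardinality: $|R_m \triangle R_m g| \le \sum_{j=1}^{n} |R_m \triangle R_m s_j|$, so the ratio tends to $0$ as $m \to \infty$.
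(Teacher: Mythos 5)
The paper states this proposition without proof, so there is no argument to compare against; your proof is correct and supplies the computation the author presumably had in mind. The bookkeeping checks out: $|R_m|=mN^m$ by uniqueness of normal forms, the $b^{\pm1}$-translates differ from $R_m$ in exactly two full rows (using $a^kb^{-1}=b^{-1}a^{Nk}$ for the $b^{-1}$ case), giving $2N^m$, and the $a^{\pm1}$-translates differ by $2N^i$ elements in row $i$, summing to $2(N^m-1)/(N-1)$; all ratios are $O(1/m)$, and your reduction from generators to arbitrary $g$ via $|Fgh\triangle F|\le|Fg\triangle F|+|Fh\triangle F|$ is the correct right-invariant form of the triangle inequality. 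One point worth emphasizing: your decision to verify the \emph{right} F\o lner condition $|R_m\triangle R_m s|=o(|R_m|)$ is not just a convention but the only viable one, since $\{R_m\}$ fails the left condition — for instance $bR_m=\{a^{Nk}b^{i+1}\}$ meets $R_m$ in only $(m-1)N^{m-1}$ elements, so $|bR_m\triangle R_m|/|R_m|$ stays bounded below by $1-1/N$ — and the right condition is the one matching the paper's use of the right Cayley graph. A purely cosmetic quibble: the top row is not ``almost disjoint'' from its $a$-translate (the shift $N^{m-1}$ still leaves an overlap of $(1-\tfrac1N)N^m$, half the row when $N=2$); but this phrase plays no role in your estimate, which correctly attributes the $o(1)$ ratio to the growing height factor $m$ in $|R_m|$.
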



\subsection{The Cayley graph }\label{subsection:Cayley}

To finish this section let us take a look at the structure of the Cayley graph of ascending HNN-extensions of finitely generated abelian groups, in particular that of $\BS$, $N\ge 2$. 

Recall that given a group $G$ together with a finite generating set $S\subseteq G$, the \textit{(right) Cayley graph} of $G$ with respect to $S$ is the labeled directed graph  $\Gamma(G,S)=(V_{\Gamma},E_{\Gamma},\lambda_{\Gamma})$ where
\begin{enumerate}
	\item the set of vertices is $V_\Gamma=G$, 
	\item the set of edges is $E_{\Gamma}=\{(g,gs)\in G\times G\mid s\in S\}$, and
	\item the labeling function $\lambda_{\Gamma}:E_{\Gamma}\to V_{\Gamma}$ satisfies $\lambda_{\Gamma}(g,gs)=s$, for $g\in G$, $s\in S$.
\end{enumerate}
For $g\in G$ we denote by $|g|_S$ the minimum distance between $g$ and the identity, in the Cayley graph $\Gamma(G,S)$. When the generating set $S$ is fixed and understood from the context, we simply write $|g|$. We call $|\cdot|_S$ the word metric in $G$ associated with $S$.

A section of the Cayley graph of $\BS$ with respect to the generating set $S=\{a,b\}$ is shown in Figure \ref{fig:section_cayley_graph_bs}, where we see that its structure is that of rows (along the edges labeled by the $a$-generator) being arranged (in a sideways view) as an N-ary tree. Seen from the front, each row has below it (i.e.\ in the $b^{-1}$-direction) a unique $\mathbb{Z}$-section, and above it (i.e.\ in the $b$-direction) $N$ new $\mathbb{Z}$-sections. More formally, defining $A\coloneqq\{a^k \mid k\in\mathbb{Z}\}$, a set of the form $gA$ for $g\in \BS$ will be called an $\mathbf{a}$\textit{-row} of the Cayley graph of $\BS$, meanwhile a set of the form $\displaystyle\bigcup_{n=1}^{\infty}gb^{-n}A  \cup\bigcup_{n=0}^\infty\left( g\prod_{s=1}^{n}(a^{i_s}b)A \right)$ for $g\in \BS$ and a sequence $\{i_s\}_{s=1}^{\infty}\in \{0,\ldots,N-1\}^{\mathbb{N}}$ will be called a \textit{sheet} of the Cayley graph of $\BS$. An example of the latter is illustrated in Figure \ref{fig:bssheet}. 

In the drawings of the Cayley graph of $\BS$ made in the next sections we will omit the label as well as the direction of the edges from the drawing, as they are not relevant for the context they are used in.

The case of the Cayley graph of an ascending HNN-extension $G*_{\psi}$ of a finitely generated abelian group is similar to the one described above: it is composed of copies of the Cayley graph of $G$ connected through the stable letter $t$, in such a way that each one of them has above it $|G:\psi(G)|$ copies of itself. We call these copies of $G$ the $G$\textit{-sections} of the Cayley graph. We analogously define the notion of a \textit{sheet} of the Cayley graph of the HNN-extension $G*_{\psi}$.

We finish this section by making the following definitions, which enable us to enunciate the results of the next section in a clearer way.
\begin{definition}
	For $h\in G$ we define the \textit{$G$-section containing $h$} as
	$$
	\Gamma_{h}\coloneqq \{hg\mid g\in G \}.
	$$
	Using the normal form from Proposition \ref{prop:normalform_ascending_hnn} we can write $h=t^{-j}gt^i$, for $i,j\ge 0$, $g\in G$. In this case we say that $\Gamma_h$ is a \textit{$G$-section of level $i-j$}. Note that thanks to the uniqueness of the normal form, the value $i-j$ is well defined for $h$.
	
	Moreover, note that the $G$-section containing $h$ is naturally in bijection with $G$, via the map $\Gamma_h\to G$ defined by $hg\mapsto g$, for $g\in G$. Hence the restriction of a configuration $x\in \mathcal{A}^{G*_{\psi}}$ can be interpreted as a configuration $\hat{x}\in \mathcal{A}^{G}$, where $\hat{x}_{g}\coloneqq x_{hg}$.
	
	Given a subgroup $H\leqslant G$ and a configuration $x\in \mathcal{A}^{G*_{\psi}}$, we say that \textit{the $G$-section $\Gamma_h$ of $x$ is $H$-periodic} if $\hat{x}$ is $H$-periodic as a configuration in $\mathcal{A}^{G}$, according to the notation of the previous paragraph. That is, if $x_{hgg'}=x_{hg}$,  for every $g\in G$, $g'\in H$.
\end{definition}
\begin{figure}[h]
	\centering
	\includegraphics[scale=0.8]{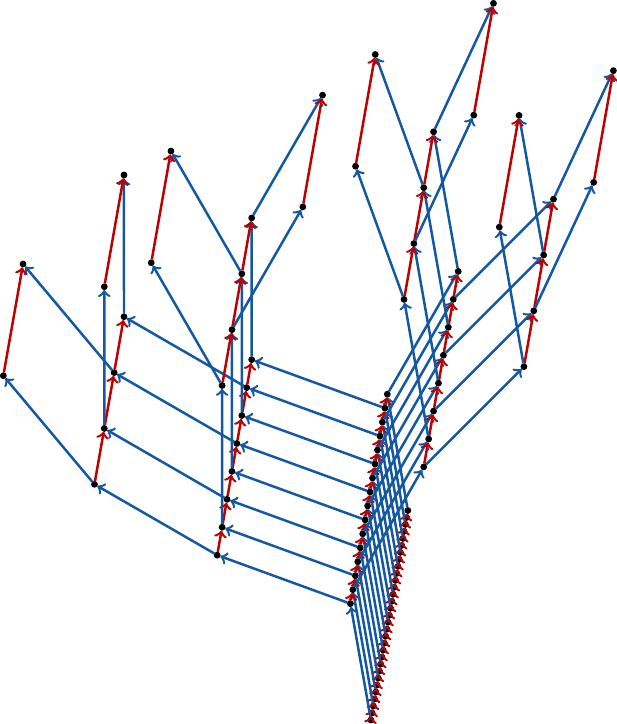}
	\caption{A section of the Cayley graph of $BS(1,2)$}
	\label{fig:section_cayley_graph_bs}
\end{figure}
\begin{figure}[h]
	\centering
	\includegraphics[scale=0.9]{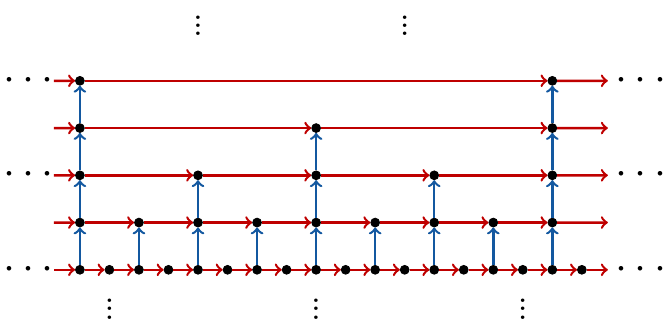}
	\caption{A section of a sheet of the Cayley graph of $BS(1,2)$.  Horizontal edges are labeled with ``$a$'', while vertical edges are labeled with ``$b$''}
	\label{fig:bssheet}
\end{figure}

The next notion allows us to distinguish $G$-sections of the same level, according to what is the highest level of intersection between sheets containing them.

\begin{definition} Consider an ascending HNN-extension $G*_{\psi}$ and a subgroup $H\leqslant G$. For $i,\ell\in \mathbb{Z}$, $\ell < i$, we say that two $G$-sections $\Gamma_h, \Gamma_{h'}$ of the same level $i$ \textit{share a common $G$-section of level $\ell$} if there exists a $G$-section $\Gamma_{h''}$ of level $\ell$ such that $\Gamma_{h}\subseteq \Gamma_{h''}t^{i-\ell}$ and $\Gamma_{h'}\subseteq \Gamma_{h''}t^{i-\ell}$.
	
	Intuitively, this means that starting at the $G$-section $\Gamma_{h''}$ we can reach both $G$-sections $\Gamma_{h}$ and $\Gamma_{h'}$ by only moving in the $t$ direction $i-\ell$ times.
\end{definition}


\section{Periodicity constraints}\label{section:weak_periodicity}

Consider $G$ a finitely generated abelian group and $G*_{\psi}$ an ascending HNN-extension. This section focuses on studying how (weakly) periodic configurations on $\mathcal{A}^{G*_{\psi}}$ exhibit some rigidity when their stabilizer and the structure of the Cayley graph of the group synchronize in some sense. In particular, we will study $G$-periodic configurations and obtain results regarding the structure of their $G$-sections.
\begin{proposition}\label{prop:ascending HNN extension periodicity}
	Suppose there exists $x\in \mathcal{A}^{G*_{\psi}}$, $H\leqslant G$ with $\psi(H)\leqslant H$ and $\ell \ge 0$ such that $\psi^{\ell}(H)\leqslant \mathrm{Stab}(x)$. Then all $G$-sections of $x$ of level greater or equal than $\ell$ are $H$-periodic.
	
	In the case $H=G$, $x$ assigns the same symbol to all $G$-sections of the same level sharing a common $G$-section of level greater or equal than $\ell$.
\end{proposition}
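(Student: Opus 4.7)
The plan is to reduce the claimed $H$-periodicity to the hypothesis $\psi^{\ell}(H)\leqslant \mathrm{Stab}(x)$ via an explicit conjugation inside $G*_{\psi}$. Fix a $G$-section $\Gamma_h=hG$ of level $i-j\geq\ell$, with $h=t^{-j}g_0t^i$ written in the normal form of Proposition~\ref{prop:normalform_ascending_hnn}. Showing that $x|_{\Gamma_h}$ is $H$-periodic amounts to checking $x_{hg}=x_{hgh_1}$ for every $g\in G$ and $h_1\in H$. Because the shift acts by left multiplication and $G$ is abelian, one has $hgh_1=(hh_1h^{-1})\cdot hg$, so it suffices to prove that the conjugate $k:=hh_1h^{-1}$ lies in $\mathrm{Stab}(x)$.

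The core computation is to rewrite $k$ as an element of $G$. Using Lemma~\ref{lemma:ascending_hnn_further_identifications} (in the form $t^iu t^{-i}=\psi^i(u)$ for $u\in G$) to push $t^i$ through $h_1$, and then abelianity of $G$ to cancel $g_0$, we obtain $k=t^{-j}\psi^i(h_1)t^j$. Exploiting $i-j\geq \ell\geq 0$, factor $\psi^i(h_1)=\psi^j(\psi^{i-j}(h_1))$ and apply the identity $u=t^{-j}\psi^j(u)t^j$ to collapse this to $k=\psi^{i-j}(h_1)\in G$. The hypothesis $\psi(H)\leqslant H$, iterated $i-j-\ell$ times, keeps us inside $H$, yielding $k=\psi^{\ell}(\psi^{i-j-\ell}(h_1))\in \psi^{\ell}(H)\leqslant \mathrm{Stab}(x)$, which is precisely what was needed.

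For the second statement (the case $H=G$), the first part already implies that every $G$-section of level $\geq \ell$ is monochromatic. Given two sections $\Gamma_1,\Gamma_2$ at a common level $M$ sharing a common lower $G$-section $\Gamma$ of level $L\geq\ell$, pick a base point $h_0=g_\Gamma t^L\in \Gamma$ and representative points $\tilde h_s=h_0 g_s t^{M-L}\in \Gamma_s$ for $s=1,2$. The same style of computation gives $\tilde h_1\tilde h_2^{-1}=\psi^L(g_1g_2^{-1})\in \psi^{\ell}(G)\leqslant \mathrm{Stab}(x)$, so $\tilde h_1=k\tilde h_2$ with $k\in \mathrm{Stab}(x)$, and hence $x_{\tilde h_1}=x_{\tilde h_2}$; i.e.\ both sections carry the same symbol. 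The main subtlety throughout is bookkeeping with the normal form: the reduction $t^{-j}\psi^i(u)t^j=\psi^{i-j}(u)$ requires $i\geq j$, and the extra gap $i-j\geq \ell$ is exactly what lands the resulting element of $G$ inside $\psi^{\ell}(H)\leqslant \mathrm{Stab}(x)$.
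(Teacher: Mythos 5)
Your argument is correct and is essentially the paper's proof written in conjugated form: your identity $hh_1h^{-1}=\psi^{i-j}(h_1)$ is exactly the chain of equalities $t^{-j}gt^{i+j+\ell}h=\psi^{\ell}(\psi^{i}(h))\,t^{-j}gt^{i+j+\ell}$ used there, and both parts land in $\psi^{\ell}(H)\leqslant\mathrm{Stab}(x)$ in the same way. (One cosmetic point: in the second part the base point of $\Gamma$ should be written in its general normal form $t^{-j}g_{\Gamma}t^{L+j}$ rather than $g_{\Gamma}t^{L}$, since not every section of level $L$ admits a representative without negative powers of $t$; the computation is unchanged.)
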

\begin{proof}
	Consider $x\in \mathcal{A}^{G*_{\psi}}$ as above, so that
	\begin{equation}\label{eq:weak H periodicity}
	\sigma_{\psi^{\ell}(h)}(x)=x,\ \text{for all }h\in H.
	\end{equation}
	
	It suffices to prove that for any $i,j\ge 0$, $g\in G$ and $h\in H$ we have that
	\begin{equation}\label{eq: weak H periodicity - equation to prove}
	x_{t^{-j}gt^{i+j+\ell}h}=x_{t^{-j}gt^{i+j+\ell}}.
	\end{equation}
	In effect, note that using Lemma \ref{lemma:ascending_hnn_further_identifications} and commutativity of $G$ we have that
	\begin{align*}
	t^{-j}gt^{i+j+\ell}h&=t^{-j}g\psi^{i+j+\ell}(h)t^{i+j+\ell}	\\
	&=t^{-j}\psi^{i+j+\ell}(h)gt^{i+j+\ell}	\\
	&=\psi^{i+\ell}(h)t^{-j}gt^{i+j+\ell}\\
	&=\psi^{\ell}(\psi^i(h))t^{-j}gt^{i+j+\ell}.
	\end{align*}
	As $\psi^i(h)\in H$ thanks to the hypothesis, Equation \eqref{eq: weak H periodicity - equation to prove} follows from Equation  \eqref{eq:weak H periodicity}.

	For the second part of the proposition, it suffices to prove that for any $i,j,h\ge 0$ and $g,\tilde{g}\in G$ we have
	$$
	x_{t^{-j}gt^{i+j+\ell+h}}=x_{t^{-j}gt^{i+j+\ell}\tilde{g}t^h}.
	$$
	
	Again using commutativity of $G$ and Lemma \ref{lemma:ascending_hnn_further_identifications} we have
	\begin{align*}
	t^{-j}gt^{i+j+\ell}\tilde{g}t^h&=t^{-j}g\psi^{i+j+\ell}(\tilde{g})t^{i+j+\ell+h}\\
	&=t^{-j}\psi^{i+j+\ell}(\tilde{g})gt^{i+j+\ell+h}\\
	&=\psi^{i+\ell}(\tilde{g})t^{-j}gt^{i+j+\ell+h},
	\end{align*}
	and the statement follows from $x$ being $\psi^{\ell}(G)$-periodic.
\end{proof}

\begin{corollary}
	Let $x\in \mathcal{A}^{G*_{\psi}}$ be a strongly periodic configuration, and suppose there exist $H\leqslant G$ with $\psi(H)\leqslant H$ and $\ell \ge 0$ such that $\psi^{\ell}(H)\leqslant \mathrm{Stab}(x)$. Then all $G$-sections of $x$ are $H$-periodic.
\end{corollary}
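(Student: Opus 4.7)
The plan is to reduce everything to Proposition \ref{prop:ascending HNN extension periodicity}, which already provides $H$-periodicity on every $G$-section of level $\geq \ell$. What remains is to transfer this $H$-periodicity down to the $G$-sections of level strictly less than $\ell$, and the idea is to use the strong periodicity hypothesis to carry such a section \emph{upward} into the regime already handled.

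The first step I would take is to extract a pure $t$-period. Since $|\mathrm{Orb}(x)|<\infty$, the cyclic subgroup $\langle t\rangle$ acts on the finite set $\mathrm{Orb}(x)$, so two distinct non-negative powers of $t$ send $x$ to the same configuration; subtracting yields an integer $T\geq 1$ with $t^{T}\in\mathrm{Stab}(x)$. Unfolding the definition of the shift action and using closure of $\mathrm{Stab}(x)$ under inverses then gives the left-translation identity $x_{h'}=x_{t^{kT}h'}$ for every $h'\in G*_{\psi}$ and every $k\in\mathbb{Z}$; geometrically, $t^{kT}$ acts trivially on $x$ while carrying $G$-sections of level $m$ to $G$-sections of level $m+kT$.

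Given this, fix a $G$-section of level $m<\ell$ containing $v=t^{-j}gt^{i}$ (with $i-j=m$) and fix $h\in H$. I would choose $k$ large enough that both $kT\geq j$ and $kT+m\geq\ell$. Lemma \ref{lemma:ascending_hnn_further_identifications} then rewrites
$$
t^{kT}v \;=\; t^{kT-j}g\,t^{i} \;=\; \psi^{kT-j}(g)\,t^{\,kT+m},
$$
an element whose $G$-section lies at level $kT+m\geq\ell$, and which is therefore $H$-periodic by Proposition \ref{prop:ascending HNN extension periodicity}. Combining this with the $t^{T}$-invariance applied at both ends produces
$$
x_{vh} \;=\; x_{t^{kT}vh} \;=\; x_{t^{kT}v} \;=\; x_{v},
$$
which is exactly $H$-periodicity of the section at the basepoint $v$; the same argument applied to $vg'\in\Gamma_{v}$ for each $g'\in G$ (which again has normal form $t^{-j}g''t^{i}$ at level $m$) extends it to all of $\Gamma_{v}$. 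Since $m$ was arbitrary, every $G$-section of $x$ is $H$-periodic.

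The only step that is not a matter of bookkeeping is the very first one: producing $T\geq 1$ with $t^{T}\in\mathrm{Stab}(x)$. This is precisely where the strong periodicity hypothesis is used, and it is the reason the corollary does not already follow from Proposition \ref{prop:ascending HNN extension periodicity} alone; mere $\psi^{\ell}(H)$-periodicity leaves no means of controlling the levels below $\ell$. Everything past that first step is a consequence of the ``upward'' action of $t$ on levels together with the commutation identity packaged into Lemma \ref{lemma:ascending_hnn_further_identifications}.
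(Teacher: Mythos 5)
Your proposal is correct and follows essentially the same route as the paper: use strong periodicity and the pigeonhole principle to produce a positive power $t^{T}\in\mathrm{Stab}(x)$, translate a low-level $G$-section up to level at least $\ell$ where Proposition \ref{prop:ascending HNN extension periodicity} applies, and transport the $H$-periodicity back down. Your write-up is in fact slightly more explicit than the paper's, which carries out the computation only for base points $g\in G$ and leaves the case of a general section $\Gamma_{t^{-j}gt^{i}}$ implicit.
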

\begin{proof}
	Since $x$ is strongly periodic, it has a finite orbit under the action of $G*_{\psi}$, so in particular the set $\{ \sigma_{t^{-q}}(x): q\ge 1 \}$ is finite. Hence there exists a sequence of increasing positive integers $\{q_n\}_{n\ge 1}$ such that for every $n\ge1$
	$$
	\sigma_{t^{-q_1}}(x)=\sigma_{t^{-q_n}}(x),
	$$
	or equivalently
	$$
	\sigma_{t^{q_1-q_n}}(x)=x.
	$$
	Fix $g\in G*_{\psi}$, and note that by considering sufficiently large $n$, the element $t^{q_n-q_1}g$ will form part of a $G$-section of level greater or equal than $\ell$. Then using Proposition \ref{prop:ascending HNN extension periodicity} we see that for any $h\in H$:
	\begin{align*}
	x_{g}&=(\sigma_{t^{q_1-q_n}}(x))_{g}\\
	&=x_{t^{q_n-q_1}g}\\
	&=x_{t^{q_n-q_1}gh}\\
	&=\sigma_{t^{q_1-q_n}}(x)_{gh}\\
	&=x_{gh}.
	\end{align*}
	Hence every $G$-section in $x$ is $H$-periodic.
\end{proof}

The behavior observed in the previous theorems is not only related to the configurations appearing on the hypotheses, but also to $G*_{\psi}$-subshifts which contain them. In the following theorem we see how for a $G*_{\psi}$-subshift $X$, having such a weakly periodic configuration as above implies having one with a global structure of $H$-periodic $G$-sections, as well as the existence of strongly periodic configurations with this property for SFTs in the case $H=G$.

\begin{theorem}\label{thm: strong H periodicity HNN extensions in subshifts and SFTs}
	Let $x\in \mathcal{A}^{G*_{\psi}}$, and suppose there exist $H\leqslant G$ with $\psi(H)\leqslant H$ and $\ell \ge 0$ such that $\psi^{\ell}(H)\leqslant \mathrm{Stab}(x)$. Suppose $X$ is a $G*_{\psi}$-subshift containing the configuration $x$. Then there exists a configuration $y\in X$ such that all its $G$-sections are $H$-periodic.
	
	In particular if $H=G$ and $X$ is an SFT, then $X$ contains a strongly periodic configuration for which each of its $G$-sections is monochromatic, that is, it assigns the same symbol to all of their elements, and $G$-sections of the same level share the same symbol.
\end{theorem}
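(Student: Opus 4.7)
The plan is to obtain $y$ as an accumulation point of the sequence $\{\sigma_{t^{-k}}(x)\}_{k \ge 0}$ inside the subshift that contains $x$. The key observation, which follows from Lemma \ref{lemma:ascending_hnn_further_identifications} together with the normal form of Proposition \ref{prop:normalform_ascending_hnn}, is that if $g$ has level $m$ then $t^k g$ has level $m + k$. Consequently $\sigma_{t^{-k}}(x)_g = x_{t^k g}$ reads the $G$-section of $x$ at level $m+k$, which by Proposition \ref{prop:ascending HNN extension periodicity} is $H$-periodic whenever $m + k \ge \ell$. Using that $X$ is closed in the compact space $\mathcal{A}^{G *_\psi}$, I would extract a convergent subsequence $\sigma_{t^{-k_n}}(x) \to y \in X$ with $k_n \to \infty$. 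Checking $H$-periodicity of any $G$-section of $y$ at level $m$ reduces to a family of pointwise equalities, each of which holds in $\sigma_{t^{-k_n}}(x)$ as soon as $k_n \ge \ell - m$, and hence persists in the limit $y$.

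Now assume $H = G$ and $X$ is an SFT. The first conclusion already makes each $G$-section of $y$ monochromatic. The second part of Proposition \ref{prop:ascending HNN extension periodicity}, carried through the same limiting argument, forces all $G$-sections of $y$ at a common level to share the same symbol: two such sections at level $m$ lift under $t^{k_n}$ to sections of $x$ at level $m + k_n$, whose common ancestor in the tree of $G$-sections sits at a level which exceeds $\ell$ once $k_n$ is large. Thus $y$ factors as $y = c \circ \lambda$, where $\lambda : G *_\psi \to \mathbb{Z}$ is the well-defined level homomorphism ($\lambda(t) = 1$, $\lambda|_G = 0$; this respects the HNN-relation $tkt^{-1}=\psi(k)$ since both sides map to $0$), and $c \in \mathcal{A}^\mathbb{Z}$.

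To upgrade $y$ to a strongly periodic configuration in $X$, I introduce the auxiliary set $Y \coloneqq \{c \in \mathcal{A}^\mathbb{Z} : c \circ \lambda \in X\}$. Each forbidden pattern of $X$, with finite support $F$ and values $(v_f)_{f \in F}$, projects via $\lambda$ to a forbidden pattern on $c$ of bounded support $\lambda(F) \subset \mathbb{Z}$, provided the values at any level attained twice by $\lambda$ on $F$ agree; otherwise the original pattern imposes no constraint on level-only configurations and can be discarded. Thus $Y$ is a $\mathbb{Z}$-SFT, non-empty as it contains the sequence $c$ arising from $y$. By the classical fact that every non-empty $\mathbb{Z}$-SFT admits a periodic point (pigeonhole on blocks of the window length), there exists a $p$-periodic $c' \in Y$. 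The configuration $y' \coloneqq c' \circ \lambda$ then lies in $X$, has monochromatic $G$-sections with same-level sections sharing a common symbol, and satisfies $\mathrm{Stab}(y') \supseteq \lambda^{-1}(p\mathbb{Z})$, a subgroup of index $p$ in $G *_\psi$; hence $y'$ is strongly periodic.

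The subtlest step, in my view, is checking that $Y$ is indeed a $\mathbb{Z}$-SFT, i.e.\ that projecting the finite list of forbidden patterns of $X$ through $\lambda$ yields only finitely many effective forbidden $\mathbb{Z}$-patterns with uniformly bounded support; the bookkeeping about discarding projections with inconsistent values needs to be done cleanly. The remaining ingredients---the compactness extraction, the existence of periodic orbits in non-empty $\mathbb{Z}$-SFTs, and the index computation for $\mathrm{Stab}(y')$---are standard.
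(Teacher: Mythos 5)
Your construction of $y$ as a limit point of $\{\sigma_{t^{-k}}(x)\}_k$, with $H$-periodicity of each $G$-section verified coordinatewise via Proposition \ref{prop:ascending HNN extension periodicity} and preserved in the limit, is exactly the paper's argument for the first assertion. For the SFT case you take a genuinely different route. The paper recodes $X$ to a nearest-neighbour SFT, applies a pigeonhole argument to find a level $i$ and a gap $h\ge 1$ such that the section at level $i$ and all sections at level $i+h$ above it carry the same symbol, and then glues copies of the intervening block to tile the Cayley graph. You instead observe that $y$ factors through the level homomorphism $\lambda:G*_{\psi}\to\mathbb{Z}$ and that $Y=\{c\in\mathcal{A}^{\mathbb{Z}}:c\circ\lambda\in X\}$ is a nonempty $\mathbb{Z}$-SFT, so a periodic point $c'$ of $Y$ pulls back to a configuration stabilized by the finite-index subgroup $\lambda^{-1}(p\mathbb{Z})$. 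The step you flag as delicate does go through: a forbidden pattern of $X$ whose values disagree on some fiber of $\lambda$ restricted to its support can never occur in a configuration of the form $c\circ\lambda$ and is simply vacuous there, while the remaining patterns descend to finitely many words supported on the bounded sets $\lambda(F)$, and surjectivity of $\lambda$ ensures that avoiding the descended words is equivalent to $c\circ\lambda$ avoiding the originals. Your variant buys a cleaner and more explicit argument --- it avoids the nearest-neighbour recoding and the somewhat informal gluing, and it exhibits the finite-index stabilizer directly --- at the cost of introducing the auxiliary one-dimensional SFT; the paper's gluing argument, by contrast, stays entirely inside $X$ and makes no use of the level homomorphism.
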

\begin{proof}
	Thanks to Proposition \ref{prop:ascending HNN extension periodicity} we have that all $G$-sections at a level greater or equal than $\ell$ in $x$ are $H$-periodic. We define for each $n\ge 1$ the configuration $x^n\coloneqq\sigma_{t^{-n}}(x)\in X$ and choose $y\in X$ to be a limit point of the sequence $\{x^n\}_{n\in \mathbb{N}}$, which exists by compactness of $X$.

	As each $G$-section at a sufficiently high level of $x$ is $G$-periodic, the construction of the sequence $\{x^n\}_{n\in \mathbb{N}}$ immediately shows that $y$ satisfies the required properties.
	
	Now suppose that $H=G$ and that $X$ is an SFT. As periodicity is preserved through a conjugacy map, we may suppose without loss of generality that $X$ is nearest neighbors SFT. Considering the configuration $y\in X$ from above, by our extra hypothesis of $H=G$ all $G$-sections of $y$ are monochromatic (since they are $G$-periodic), and $G$-sections of the same level are equal.
	
	Using the pigeonhole principle, there must exist a $G$-section of some level $i$ and $h\ge 1$ such that itself together with all $G$-sections of level $i+h$ having the former as a common base share the same symbol. Using the fact that $X$ is a nearest neighbors SFT, we can glue copies of these sections to cover the Cayley graph of $G$ and obtain a strongly periodic configuration $z\in X$.
	
\end{proof}

\begin{remark} Having all $G$-sections monochromatic is not by itself a sufficient condition for a configuration $x\in \mathcal{A}^{G*_{\psi}}$ to be strongly periodic. For example, in the case $G*_{\psi}=\BS$ we can construct an example of such a configuration by considering a non-periodic sequence $z\in \mathcal{A}^\mathbb{Z}$ and define $x\in x\in \mathcal{A}^{\BS}$ to consist of monochromatic $\mathbb{Z}$-sections whose symbols are defined according to the sequence $z$. Figure \ref{fig:monochromatic_rows_tree_counterexample} shows a sideways view of the Cayley graph of $\BS$, where each symbol in this figure represents the symbol of the entire $\mathbb{Z}$-section at that level in $x$.
	\begin{figure}
		\centering
		\includegraphics{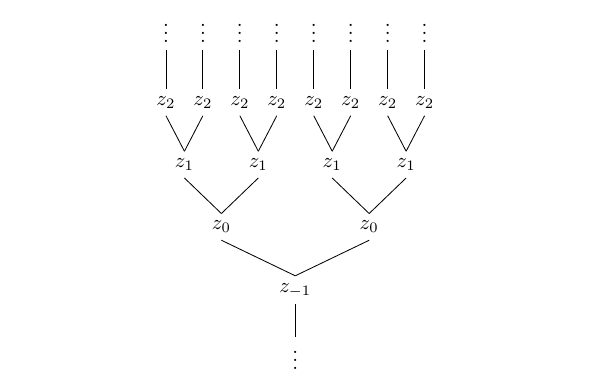}
		\caption{Illustration of a configuration in a sideways view of the Cayley graph of $\BS$, having all $\mathbb{Z}$-sections monochromatic but not being strongly periodic}
		\label{fig:monochromatic_rows_tree_counterexample}
	\end{figure}
\end{remark}


\section{Graph-coloring subshifts}\label{section:graph_coloring_subshifts}
Given a group $G$ generated by a finite subset $S\subseteq G$, we define for $n\ge 2$ the \textit{graph-coloring subshift} (GCS)
$$
\mathcal{C}_{n}\coloneqq\left\{x\in \{0,\ldots,n-1\}^G\mid \text{ for all } g\in G, s\in S: \ x_{g}\neq x_{gs} \right\}.
$$
That is, each configuration $x\in \mathcal{C}_{n}$ describes a proper $n$-coloring of the Cayley graph $\Gamma(G,S)$ of $G$ with respect to the generating set $S$. In what follows we consider the GCS in $n\ge 2$ colors for $\BS$ generated by the standard set $S=\{a,b\}$.

In this section we study some dynamical properties of graph-coloring subshifts on $\BS$, namely mixing conditions and estimates of their entropy, among others. We restrict ourselves to the case of Baumslag-Solitar groups rather than working in the general setting of ascending HNN-extensions of finitely generated abelian groups, since the linear structure of the base group allows for rather simple constructions and results.
\begin{proposition}\label{prop:GCS_nonemptiness} If $N$ is odd $\mathcal{C}_2\neq \varnothing$, and if $N$ is even then $\mathcal{C}_2=\varnothing$ meanwhile $\mathcal{C}_3\neq \varnothing$. In particular, independent of the parity of $N$, for every $n\ge 3$ we have $\mathcal{C}_n\neq \varnothing$.
\end{proposition}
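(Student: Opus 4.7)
The plan is to prove the three assertions in turn: (i) for $N$ odd, $\mathcal{C}_2 \neq \varnothing$; (ii) for $N$ even, $\mathcal{C}_2 = \varnothing$; and (iii) for $N$ even, $\mathcal{C}_3 \neq \varnothing$. The ``in particular'' clause will then follow at once, since a proper $3$-coloring yields a proper $n$-coloring for any $n \geq 3$ by relabeling, and for $N$ odd the same applies to a proper $2$-coloring.

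For (i), I would define the map $\varphi \colon \BS \to \mathbb{Z}/2\mathbb{Z}$ by $\varphi(a) = \varphi(b) = 1$. This is a well-defined homomorphism precisely because, modulo $2$, the defining relator becomes $1+1-1-N \equiv 0$, which holds exactly when $N$ is odd. The configuration $x_g := \varphi(g)$ lies in $\mathcal{C}_2$ since right multiplication by either $a$ or $b$ flips the value of $\varphi$.

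For (ii), the plan is to exhibit an explicit odd cycle in the Cayley graph. Reading off the defining relator yields the closed walk $e \to b \to ba \to bab^{-1} = a^N \to a^{N-1} \to \cdots \to a \to e$ of length $N+3$, whose vertices are pairwise distinct (by comparing $b$-exponents and $a$-parts in the normal form of Proposition \ref{prop:normalform_ascending_hnn}). For $N$ even, $N+3$ is odd, so the Cayley graph contains an odd cycle, hence is not bipartite, and no proper $2$-coloring exists.

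For (iii), I would pull back a proper $3$-coloring from a finite-index quotient. When $N \geq 4$ is even, the abelianization $\BS^{\mathrm{ab}} \cong \mathbb{Z}/(N-1)\mathbb{Z} \times \mathbb{Z}$ has an odd cyclic factor of size at least $3$; choosing any proper $3$-coloring $f$ of the odd cycle $\mathbb{Z}/(N-1)\mathbb{Z}$, the map $c(x,y) := (f(x) + y) \bmod 3$ is a proper $3$-coloring of the Cayley graph of the abelianization with respect to the images of $a,b$, and pulling $c$ back along the abelianization map produces an element of $\mathcal{C}_3$. The main obstacle is the case $N = 2$, in which the abelianization collapses to $\mathbb{Z}$ and kills $a$, so this approach fails. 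Here I would instead use the non-abelian quotient $\BS[2] \twoheadrightarrow \mathbb{Z}/3\mathbb{Z} \rtimes \mathbb{Z}$, in which $\mathbb{Z}$ acts on $\mathbb{Z}/3\mathbb{Z}$ by multiplication by $-1$; this quotient is well-defined because $2 \equiv -1 \pmod{3}$ makes the relation $bab^{-1} = a^2$ match the action. The map $(x,y) \mapsto (x+y) \bmod 3$ is a proper $3$-coloring of this quotient, and its pullback to $\BS[2]$ lies in $\mathcal{C}_3$.
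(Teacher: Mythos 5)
Your proof is correct, and parts (i) and (ii) are essentially the paper's argument in disguise: the paper's $2$-coloring $b^{-j}a^kb^i\mapsto i+j+k \bmod 2$ is exactly your homomorphism $\varphi$ evaluated on the normal form, and the paper's contradiction for even $N$ (that $b$ and $ba=a^Nb$ are adjacent yet forced to share a color) is precisely the odd closed walk you exhibit — note that you do not even need the vertices of your length-$(N{+}3)$ walk to be pairwise distinct, since any odd closed walk already obstructs bipartiteness. Where you genuinely diverge is part (iii). The paper colors a base $\mathbb{Z}$-section with two alternating colors and propagates row by row, using the evenness of $N$ to ensure each new row sees only one color through its $b$-edges, and obtains a configuration by an inductive (limiting) construction; this greedy scheme is uniform in $N$ and is what gets reused later in Lemma \ref{lem:gcs_extend_to_rectangles} and Proposition \ref{prop:gcs_rectangle_extension}. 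You instead pull back explicit proper $3$-colorings from the quotients $\mathbb{Z}/(N-1)\mathbb{Z}\times\mathbb{Z}$ (for even $N\ge 4$) and $\mathbb{Z}/3\mathbb{Z}\rtimes\mathbb{Z}$ (for $N=2$); both quotient maps respect the relator as you check, and the colorings $c(x,y)=f(x)+y$ and $c(x,y)=x+y$ are proper for the reasons you give. Your route costs a case split at $N=2$ but buys closed-form colorings that are invariant under the kernel of the quotient map — in particular weakly periodic ones, in the spirit of the explicit formulas the paper later uses for its frozen $3$-colorings in Theorem \ref{thm:existence_frozen_3}.
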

\begin{proof}
	Let us see first that if $N$ is even then $\mathcal{C}_2=\varnothing$. To see this suppose we have $x\in \mathcal{C}_2$ and without loss of generality let us suppose that $x_{e_{\BS}}=0$. Then by the coloring rules we must have $x_{a^{N}}=0$, and $x_{b}=x_{a^Nb}=1$, but this cannot be since $x_{a^Nb}=x_{ba}$, so the neighboring vertices $b$ and $ba$ have the same color and this contradicts the GCS's definition.

	Now let us show that if $N$ is odd then $\mathcal{C}_2\neq \varnothing$. Moreover, we will show that in fact $|\mathcal{C}_2|=2$. To create a point $x\in \mathcal{C}_2$ let us impose that $x_{e_{\BS}}=0$, and define for every $g\in \BS$ expressed in its normal form $g=b^{-j}a^kb^i$ with $i,j\ge 0$ and $k\in \mathbb{Z}$: $x_g=i+j+k \mod 2$. We check that this provides a consistent coloring of the Cayley graph:
	for $g$ as above and a generator $s\in \{a,b\}$, the normal form of $gs$ is given by
	$$
	ga=b^{-j}a^{k+N^i}b^{i},
	$$
	if $s=a$ and 
	$$
	gb=b^{-j}a^{k}b^{i+1},
	$$
	if $s=b$.
	Then $x_{ga}=i+j+k+N^i \mod 2$ and $x_{gb}=i+1+j+k \mod 2$, and as $N$ is odd we see that $x_{ga}\neq x_g$ and $x_{gb}\neq x_g$. With this, neighboring elements in the graph have different colors and hence $x$ forms a valid configuration in $\mathcal{C}_2$. Also note that this point is completely determined by our choice of $x_{e_{\BS}}=0.$ If we had chosen instead $x_{e_{\BS}}=1$, we would have obtained the same configuration as above, but with $0$'s and $1$'s interchanged, thus we conclude that $|\mathcal{C}_2|=2$.

	Let us see now that for $N$ even we have $\mathcal{C}_3\neq\varnothing$. For this notice first that if we have an $\mathbb{Z}$-section $\Gamma_g\coloneqq\{ga^k: k\in \mathbb{Z}\}, g\in G$ colored consistently with the coloring rules using two colors, i.e. the two colors strictly alternate along $\Gamma_g$, then we can color each row directly above it respecting the coloring rules: as $\Gamma_g$ is colored with two colors and $N$ is even, then every row directly above it only sees one color through its edges connecting it to $\Gamma_g$, so if we choose the two remaining colors we can color this new row consistently. On the other hand note that if $\Gamma_g$ is colored consistently, we can also color the row exactly below it: as $\Gamma_g$ is colored with two colors we can color the vertices below it with the remaining color in $\{0,1,2\}$, and as $N$ is even we can choose any other color and fill the gaps consistently between the already colored vertices on this new row, alternating those two colors.

	With the two previous facts it is easy to see that one can construct inductively a point in $\mathcal{C}_3$. Hence $\mathcal{C}_3\neq \varnothing$.

	The final statement of the proposition follows from the fact that if $n> 3$, then $\mathcal{C}_n$ contains a copy of $\mathcal{C}_{n-1}$ by simply considering colorings with a smaller palette of colors.
\end{proof}


\subsection{Extensibility of patterns}\label{subsection:extensibility_patterns}
In the case of $2$-colorings of $\BS$ the subshift $\mathcal{C}_2$ is finite (it is either empty or has two configurations, if $N$ is even or odd, respectively) so the configurations appearing in it are somewhat rigid and admit very few different ways of coloring the Cayley graph. This leads us to ask how many ways to color are there in the case of $n\ge 3$, and with it be able to estimate topological entropy $\htop(\mathcal{C}_n)$.

In order to do this, we will first tackle the question of extending colorings of a finite subset of the Cayley graph to bigger patterns containing it. In the language of symbolic dynamics this question is the same as asking if in $\mathcal{C}_n$ locally admissible patterns are globally admissible, or under which conditions on the pattern we can ensure this. A first result in this direction answers the question affirmatively, subject to having a minimum of available colors.
\begin{proposition} \label{prop:loc_adm_5_colors}
	For $n\ge 5$, every locally admissible pattern $p$ of $\mathcal{C}_n$ is globally admissible.
\end{proposition}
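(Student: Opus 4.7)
The plan is to exploit the fact that the Cayley graph $\Gamma(\BS, \{a, b\})$ is $4$-regular: every element $g \in \BS$ has precisely four neighbors, namely $ga$, $ga^{-1}$, $gb$, and $gb^{-1}$. Since local admissibility for a pattern $p$ of $\mathcal{C}_n$ asserts exactly that $p$ is a proper coloring of the subgraph induced on $\mathrm{supp}(p)$, what has to be shown is that every such finite proper coloring extends to a proper coloring of all of $\BS$, i.e., to an element of $\mathcal{C}_n$.

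My approach is a straightforward greedy vertex-by-vertex extension. Given a locally admissible $p$ with support $F$, enumerate the countable set $\BS \setminus F = \{g_1, g_2, \ldots\}$ and inductively build proper colorings $p_k$ on $F \cup \{g_1, \ldots, g_k\}$ extending $p$. When coloring $g_k$, observe that $g_k$ has at most four neighbors in $\BS$, so the already-colored neighbors of $g_k$ (which lie in $F \cup \{g_1, \ldots, g_{k-1}\}$) forbid at most four values. Since $n \geq 5$, at least one color remains available, and assigning it to $g_k$ keeps $p_k$ locally admissible. The pointwise limit produces a configuration $x \in \{0, \ldots, n-1\}^{\BS}$ with $x|_F = p$, and since every edge of the Cayley graph is inspected at some finite stage of the construction, $x$ is a genuine proper coloring and hence belongs to $\mathcal{C}_n$.

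A slightly slicker alternative uses the compactness of $\{0, \ldots, n-1\}^{\BS}$: the closed set of configurations extending $p$ is nonempty provided every finite superset $F' \supseteq F$ admits a proper coloring extending $p$, which is once again the same greedy step applied in the finite setting.

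I do not anticipate any real obstacle in this proof: the only genuine verification is the $4$-regularity of $\Gamma(\BS, \{a, b\})$, together with the observation that the number of available colors strictly exceeds the vertex degree. The interest lies rather in the complementary regime: for $n \in \{3, 4\}$ the greedy step can fail because $g_k$ may already have all available colors appearing among its four neighbors, so a genuinely global argument exploiting the tree-like sheet structure is required in those cases, as presumably taken up in the subsequent results of this section.
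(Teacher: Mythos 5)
Your proposal is correct and follows essentially the same route as the paper: both rely on the Cayley graph of $\BS$ having maximum degree $4 < n$, extend the pattern greedily (you one vertex at a time, the paper one neighborhood-layer at a time), and pass to a limit configuration. No meaningful difference in substance.
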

\begin{proof}
	This follows from the fact that for a finite graph, its chromatic number is less than its maximum degree. Since every finite subgraph of the Cayley graph of $\BS$ has maximum degree at most $4$, then the considered pattern can be extended to any finite graph containing it by choosing for each vertex a color that none of its neighbors has, and thus preserving the coloring rules. 
	
	Fix a sequence of finite subsets $\{S_n\}_{n\ge 1}$ of $\BS$, such that
	\begin{itemize}
		\item $S_0=S$,
		\item $S_n\subseteq S_{n+1}$, for $n\ge 0$, and
		\item $\BS=\bigcup_{n\ge 0}S_n$.
	\end{itemize}
	
	Given any locally admissible pattern $p$ with support $S$, we can consider a sequence of configurations $\{x^{n}\}_{n\in \mathbb{N}}$ in $\{0,\ldots,n-1\}^{\BS}$ such that $x^{n}|_{S_n}$ is a proper coloring of $S_n$ and $x^{n}|_{S}=p$, thanks to the above explanation. Now by taking $x\in \{0,\ldots,n-1\}^{BS}$ to be an accumulation point of said sequence, we see that by construction it is a proper coloring in $\mathcal{C}_n$ which coincides with $p$ in $S$.
\end{proof}
\begin{remark} The previous proposition does not hold for $n\in \{3,4\}$. For example the locally admissible pattern in $\BS[2]$ in Figure \ref{fig:no_extend_gap} cannot be realized within a point $x \in \mathcal{C}_4$ since no color can be assigned to the vertex with an interrogation sign, while respecting the coloring rules.
	\begin{figure}
		\centering
		\includegraphics{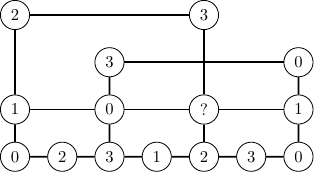}
		\caption{Locally admissible pattern for $\mathcal{C}_4$ which cannot be extended to a global configuration}
		\label{fig:no_extend_gap}
	\end{figure}	
\end{remark}
The previous remark showed a locally admissible pattern that could not be extended to form a valid configuration of the group, and the main property of this example is that although the support of the chosen pattern can be taken to be a connected subgraph of the Cayley graph, it has a ``gap" which allows us to surround a position in such a way that the pattern could not be extended. A way of avoiding these pathological cases is to require the supports of the patterns to have some sort of convexity.
\begin{lemma} \label{lem:gcs_extend_to_rectangles} For every $n\ge 3$ every $n$-coloring of the rectangle $R_m$ can be extended to a proper $n$-coloring of the rectangle $R_{m+1}$.
\end{lemma}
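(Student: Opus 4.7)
The plan is to extend the coloring of $R_m$ to $R_{m+1}$ by coloring the new vertices in $R_{m+1}\setminus R_m$ one at a time in a greedy sweep, verifying that at each step at most two neighbors of the vertex being colored have already received a color, so that a third color is always available when $n\ge 3$.

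The set of new vertices decomposes naturally as the row extensions $\{a^k b^i : N^m\le k<N^{m+1},\ 0\le i<m\}$ together with the entirely new top row $\{a^k b^m : 0\le k<N^{m+1}\}$. I would order them lexicographically by the pair $(i,k)$: process row $i$ before row $i+1$, and within each row color the vertices in order of increasing $k$. The key geometric input, coming from the relation $b^i a=a^{N^i}b^i$, is that the $a$-neighbors of $a^k b^i$ in the Cayley graph of $\BS$ are $a^{k\pm N^i}b^i$ (not $a^{k\pm 1}b^i$ unless $i=0$), while its $b$-neighbors are $a^k b^{i\pm 1}$.

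With this ordering in place, a short case analysis on $i$ would verify the bound of at most two already-colored neighbors. On row $0$ only the left $a$-neighbor $a^{k-1}$ is colored, a single constraint. On a middle row $0<i<m$ with $k\ge N^m$, the colored neighbors are the left $a$-neighbor $a^{k-N^i}b^i$ (which lies in $R_m$ when $k<N^m+N^i$ and otherwise in the already-processed portion of row $i$) together with the lower $b$-neighbor $a^k b^{i-1}$ (colored while row $i-1$ was processed, since $k\ge N^m$ puts it in that row's extension). On the top row $i=m$, the colored neighbors are the lower $b$-neighbor $a^k b^{m-1}$ (in $R_m$ if $k<N^m$, in the row-$(m-1)$ extension otherwise) and, when $k\ge N^m$, the left $a$-neighbor $a^{k-N^m}b^m$. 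In every case the remaining potential neighbors $a^{k+N^i}b^i$ and $a^k b^{i+1}$ lie strictly later in the ordering or outside $R_{m+1}$, and so impose no constraint at this step.

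The main obstacle is nothing more than the bookkeeping: one must be careful that horizontal adjacency on row $i$ jumps by $N^i$ rather than by $1$, and that the lower neighbor $a^k b^{i-1}$ of a vertex in the row-$i$ extension is itself a new vertex (since $k\ge N^m$) that was dealt with during the earlier processing of row $i-1$. Once these points are checked, at each step there are at most two forbidden colors among $\{0,\dots,n-1\}$, so a valid color exists and the greedy procedure terminates with a proper $n$-coloring of $R_{m+1}$ restricting to the given coloring on $R_m$.
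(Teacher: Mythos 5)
Your proof is correct and takes essentially the same route as the paper's: decompose $R_{m+1}\setminus R_m$ into the horizontal rows of each height, color them bottom-to-top (and left-to-right within a row), and observe that each new vertex has at most two already-colored neighbors, so $n\ge 3$ colors suffice. The paper states this tersely; your explicit ordering by $(i,k)$ and the case analysis on the $a$- and $b$-neighbors merely fill in the bookkeeping the paper leaves implicit.
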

\begin{proof}
	For every $k\in \{0,\ldots,m\}$ we define $$H_k\coloneqq \left(R_{m+1}\backslash R_{m}\right)\cap \{a^ib^k\mid i\ge 0\},$$
	which represents the elements of $R_{m+1}$ of height $k$, outside of the rectangle $R_m$. Note that by definition we have $$R_{m+1}=R_m\cup \bigcup_{k=0}^{m}H_k.$$ Then we can extend the pattern $p$ on $R_m$ by successive colorings (respecting the condition of being a proper coloring) coloring first $H_0$, then $H_1$, and so until we color $H_m$. In each step of this process each vertex has at most $2$ neighbors already colored, so having at least $3$ available colors is enough for this process to be carried out. 
\end{proof}
By using the same ideas present in the proof of the previous lemma we obtain the following proposition. Indeed, one can inductively color the rows of the Cayley graph one by one, so that each vertex has at most two colored neighbors at the moment it must choose its color. 
\begin{proposition}\label{prop:gcs_rectangle_extension} For $n\ge 3$ every locally admissible pattern for $\mathcal{C}_n$ with support a rectangle is globally admissible.
\end{proposition}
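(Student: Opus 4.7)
My plan is to extend a locally admissible pattern $p$ on $R_m$ to a proper $n$-coloring of the whole Cayley graph of $\BS$ by progressively enlarging the colored region, always ensuring that each newly colored vertex has at most two already-colored neighbors. Since $n \ge 3$, two colored neighbors still leave at least one legal color available.

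First I would iterate Lemma \ref{lem:gcs_extend_to_rectangles} to extend $p$ from $R_m$ to $R_{m+1}, R_{m+2}, \ldots$, producing in the limit a proper $n$-coloring of $V_+ \coloneqq \{a^k b^i : k \ge 0,\, i \ge 0\}$. Then I would extend each $\mathbb{Z}$-section that meets $V_+$ to a full $\mathbb{Z}$-section of $\BS$, proceeding level by level from the bottom up and extending each section leftward in the $a$-direction: each new vertex has at most one colored $a$-neighbor within its section together with, when the level $i \ge 1$, a colored $b^{-1}$-neighbor one level below (already fully extended by induction), so at most two constraints.

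Finally I would color the remaining $\mathbb{Z}$-sections of $\BS$ by a BFS traversal of the section tree. The $\mathbb{Z}$-sections of $\BS$ form a tree under the adjacency induced by $b$-edges, where each section has a unique $b^{-1}$-adjacent parent section and exactly $N$ $b$-adjacent child sections. The sections covered at the end of the previous step form a sub-tree, and in a BFS extension from it each newly attached section is adjacent to exactly one previously colored section. Coloring the vertices of a new section outward from an anchor vertex (one whose $b^{\pm 1}$-neighbor in the colored region is already assigned a color) gives each vertex at most one colored $a$-neighbor together with at most one colored $b^{\pm 1}$-neighbor, for a total of at most two constraints per vertex.

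The main obstacle is to ensure that no vertex ever ends up with three or more colored neighbors, which could in principle happen if a new section were adjacent to two distinct already-colored sections when first visited (its parent and one of its children, say). The tree structure of sections combined with the BFS order rules this out, since the colored region remains a sub-tree of the section tree at all stages, so each newly attached section is joined to the colored region by a single edge. Once this is noted, the rest of the proof reduces to the same two-constraint pigeonhole bookkeeping already used in the proof of Lemma \ref{lem:gcs_extend_to_rectangles}.
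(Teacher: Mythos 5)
Your proof is correct and follows essentially the same route as the paper, which disposes of this proposition in a single sentence preceding its statement: color the $\mathbb{Z}$-sections one by one in an order guaranteeing that each vertex has at most two already-colored neighbors, so that $n\ge 3$ colors always suffice. Your organization of that sketch --- iterating Lemma \ref{lem:gcs_extend_to_rectangles}, completing the half-rows, and then running a BFS over the (Bass--Serre) tree of $\mathbb{Z}$-sections so that each newly attached section meets the colored region along a single edge of that tree --- supplies exactly the bookkeeping the paper leaves implicit.
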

\begin{proof}
	Suppose that we have a locally admissible pattern $p$ defined on the rectangle $R_{m}$. First note that we can extend this to a proper coloring on the $0$ level. That is, on the subgroup generated by $a$. In effect, the elements at $a^{-1}$ and at $a^{N^m}$ only have one colored neighbor, so we can assign them a color without introducing forbidden patterns. By the same argument, we can color inductively neighboring vertices until we have a proper coloring of $R_m\cup \langle a \rangle$.
	
	Now we can do the same with upper levels: consider rows of level $1$ which are connected to $\langle a\rangle$. The elements neighboring $R_m$ have at most two colored neighbors, and so we can extend the pattern. Again proceeding inductively and coloring in order according to their relative distance to $R_m$, we see that we can extend the pattern to $R_m\cup \langle a\rangle \cup \langle a\rangle b$. Moreover, by repeating the same argument we can see that it is possible to extend the coloring of $R_m$ to all first $m$ levels. That is, to the set $\langle a\rangle \cup \langle a\rangle b \cup \cdots \cup \langle a\rangle b^{m-1} .$
	
	We can further extend this coloring upwards:color upper levels inductively, one level at a time, and coloring neighboring vertices of already colored vertices at each step. In this way, it is possible to guarantee that each vertex will have at most $2$ already colored neighbors. With this, we see that it is possible to extend the coloring to all elements of the form $a^kb^i$, $k\in \mathbb{Z}$, $i\ge 0$.
	
	Finally, we note that the same method works for extending the coloring into the $b^{-1}$ direction, and into the new sheets that appear when traversing this direction. Again, simply color one level at a time making sure that when coloring one vertex it has at most $2$ already colored neighbors.
\end{proof}


\subsection{Mixing and frozen colorings}\label{subsection:mixing_and frozen}
Thanks to Proposition \ref{prop:loc_adm_5_colors} we know that for $n\ge 5$ we can extend every admissible pattern to a proper coloring of the Cayley graph of $\BS$, from which we obtain the following mixing property for $\mathcal{C}_n$.
\begin{definition} We say that a subshift $X\subseteq \mathcal{A}^G$ is \textit{strongly irreducible} if there exists $F\subseteq G$ finite such that for every globally admissible patterns $p,q$ in $X$ with $\mathrm{supp}(p)\cap \mathrm{supp}(q)\cdot F=\varnothing$: $[p]\cap [q]\neq \varnothing$.
\end{definition}

\begin{theorem}\label{thm:gcs_mixing_n5} Let $n\ge 5$ and consider two non-adjacent finite subsets $F_1,F_2\subseteq \BS$, that is, such that 
	$$\left( F_1\cdot \{e_{\BS},a,a^{-1},b,b^{-1}\}\right)\cap F_2=\varnothing.$$
	Then for every choice of (locally) admissible patterns $p_i\in \{0,\ldots,n-1\}^{F_i}, i=1,2,$ there exists $x\in \mathcal{C}_n$ such that $x|_{F_1}=p_1$ and $x|_{F_2}=p_2$. With this we have that for $n\ge 5$ the GCS $\mathcal{C}_n$ is strongly irreducible.
\end{theorem}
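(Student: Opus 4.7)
The plan is to reduce the mixing claim to a direct application of Proposition \ref{prop:loc_adm_5_colors}. Given locally admissible patterns $p_1$ on $F_1$ and $p_2$ on $F_2$, first I would form the combined pattern $p$ on $F_1\cup F_2$ defined by $p|_{F_i}=p_i$ for $i=1,2$. The non-adjacency hypothesis in particular forces $F_1\cap F_2=\varnothing$ (taking the identity component of $\{e_{\BS},a,a^{-1},b,b^{-1}\}$), so $p$ is well-defined as a function.

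The key verification is that $p$ is locally admissible in $\mathcal{C}_n$. Since the forbidden patterns defining $\mathcal{C}_n$ involve only pairs of adjacent vertices in $\Gamma(\BS,\{a,b\})$, I need to check that every edge of the Cayley graph whose endpoints both lie in $F_1\cup F_2$ is assigned two distinct colors by $p$. If both endpoints lie in the same $F_i$, this is immediate from local admissibility of $p_i$. The non-adjacency hypothesis $(F_1\cdot\{e_{\BS},a,a^{-1},b,b^{-1}\})\cap F_2=\varnothing$ rules out precisely the remaining case, in which the edge would straddle $F_1$ and $F_2$. Hence $p$ is locally admissible, and Proposition \ref{prop:loc_adm_5_colors} produces $x\in \mathcal{C}_n$ with $x|_{F_1\cup F_2}=p$, which is the desired joint extension.

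For the second assertion, I would take $F=\{e_{\BS},a,a^{-1},b,b^{-1}\}$ as the constant witnessing strong irreducibility. Given any two globally admissible patterns $p,q$ of $\mathcal{C}_n$ with $\mathrm{supp}(p)\cap\mathrm{supp}(q)\cdot F=\varnothing$, both are in particular locally admissible, and the non-adjacency condition is exactly the hypothesis of the first part applied to $F_1=\mathrm{supp}(p)$ and $F_2=\mathrm{supp}(q)$. The construction above then provides a configuration in $[p]\cap[q]$.

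There is really no substantial obstacle here; the argument is essentially a bookkeeping exercise on top of Proposition \ref{prop:loc_adm_5_colors}. The only point deserving care is the local-admissibility check for $p=p_1\cup p_2$, and the non-adjacency condition is designed exactly so that no new edge constraints appear when gluing the two patterns together.
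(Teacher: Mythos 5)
Your proposal is correct and follows essentially the same route as the paper: glue $p_1$ and $p_2$ into a single pattern on $F_1\cup F_2$, observe that the non-adjacency hypothesis prevents any edge of the Cayley graph from straddling the two supports (so the glued pattern is locally admissible), and then invoke Proposition \ref{prop:loc_adm_5_colors} to extend it globally, with $F=\{e_{\BS},a,a^{-1},b,b^{-1}\}$ witnessing strong irreducibility. Your edge-by-edge verification of local admissibility is in fact slightly more explicit than the paper's, and you correctly read the set in the hypothesis as $\{e_{\BS},a,a^{-1},b,b^{-1}\}$ despite the typo in the statement.
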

\begin{proof}
	The above condition on $F_1$ and $F_2$ means that these two sets are disjoint, and separated by at least one vertex in the Cayley graph of $\BS$. 
	
	Consider two locally admissible patterns $p_i\in \{0,\ldots,n-1\}^{F_i}, i=1,2$. Then we can define a new pattern $p\in \{0,\ldots,n-1\}^{F_1\cup F_2}$ such that $p(f)=p_i(f)$ for $f\in F_i, i=1,2$ consistently, since $F_1\cap F_2=\varnothing.$ This pattern is also locally admissible thanks to the distance existing between $F_1$ and $F_2$, as between every vertex of $F_1$ and every vertex of $F_2$ there is at least one uncolored vertex. Then, according to Proposition \ref{prop:loc_adm_5_colors}, this pattern is globally admissible and hence there exists $x\in \mathcal{C}_n$ such that $x|_{F_1\cup F_2}=p$. This $x$ satisfies $x|_{F_1}=p_1$ and $x|_{F_2}=p_2$ and so we have found the desired point.
\end{proof}

The previous proposition tells us that for $n\ge 5$ the subshift $\mathcal{C}_n$ has a strong mixing property. On the other side, we can see that $\mathcal{C}_2$ has no type of mixing behavior: this $\BS$-subshift is either empty if $N$ is even, or if $N$ is odd then there is no way to assign the same color to two elements of the group $g,h\in \BS$ such that $g^{-1}h\in \left\{a^{2m+1}\mid m\in \mathbb{Z} \right\}$, which forbids the gluing of patterns at arbitrarily large distances. This contrast between $\mathcal{C}_2$ and $\mathcal{C}_n$ for $n\ge 5$ raises the question of what kind of mixing behaviors does $\mathcal{C}_3$ and $\mathcal{C}_4$ have. To study this we will use a similar approach as that of \cite{alon2019mixing}, by introducing the concept of a frozen coloring.
\begin{definition} Let $n\ge 3$. A configuration $x\in \mathcal{C}_n$ is called a \textit{frozen coloring} if for every $y\in \mathcal{C}_n$ such that there exists a non-empty finite subset $F\subseteq \BS$ with $x|_{F^c}=y|_{F^c}$, then $x= y$. That is, no coloring of $\BS$ other than $x$ can coincide with it outside of a finite set.
\end{definition}

Frozen colorings are configurations in which the neighboring vertices of every finite subset of the Cayley graph determine unequivocally how this subset must be colored, in order to obtain a proper coloring. This behavior is the reason frozen colorings are closely related to the (lack of) mixing properties of the GCS, as remarked in Theorem 4.4 of \cite{alon2019mixing} for $\mathbb{Z}^d$. In the following proposition we state and prove the precise result in our context.
\begin{proposition} \label{prop:froz_col_not_si} If $\mathcal{C}_n$ has a frozen coloring and $|\mathcal{C}_n|\ge 2$, then it is not strongly irreducible.
\end{proposition}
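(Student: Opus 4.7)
The plan is to argue by contrapositive: assume $\mathcal{C}_n$ is strongly irreducible with finite mixing set $F\subseteq \BS$, and show that no configuration $x\in \mathcal{C}_n$ can be frozen. Fix $x\in \mathcal{C}_n$, pick any vertex $g_0\in \BS$, set $c_0\coloneqq x_{g_0}$, and choose any color $c_1\in \{0,\ldots,n-1\}\setminus\{c_0\}$ (possible since $n\ge 3$). The first observation is that the singleton pattern with support $\{g_0\}$ assigning color $c_1$ is globally admissible: starting from any $y\in \mathcal{C}_n$, which exists by Proposition \ref{prop:GCS_nonemptiness}, applying the color permutation swapping $c_1$ and $y_{g_0}$ to $y$ yields an element of $\mathcal{C}_n$ realizing $c_1$ at $g_0$.

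Next, choose $R\ge 1$ large enough that the ball $B_R(g_0)$ in the word metric of $\BS$ contains the finite set $g_0 F^{-1}$, and for each $k>R$ set $S_k\coloneqq B_k(g_0)\setminus B_R(g_0)$. The restriction $x|_{S_k}$ is globally admissible since it extends to $x$, and by the choice of $R$ one has $\{g_0\}\cap S_k\cdot F=\varnothing$. Strong irreducibility then produces $z_k\in \mathcal{C}_n$ with $(z_k)_{g_0}=c_1$ and $z_k|_{S_k}=x|_{S_k}$. Passing to a convergent subsequence in the compact space $\mathcal{C}_n$ yields a limit $z\in \mathcal{C}_n$ satisfying $z_{g_0}=c_1\ne c_0=x_{g_0}$ and $z|_{\BS\setminus B_R(g_0)}=x|_{\BS\setminus B_R(g_0)}$, since every $h\in \BS\setminus B_R(g_0)$ belongs to $S_k$ for all sufficiently large $k$. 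Hence $z\ne x$ and the two configurations agree outside the finite set $B_R(g_0)$, contradicting frozenness of $x$.

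The main obstacle is the correct setup for the application of strong irreducibility: the supports of the two globally admissible patterns passed to it must satisfy $\mathrm{supp}(p)\cap \mathrm{supp}(q)\cdot F=\varnothing$, which is why the inner radius $R$ must be chosen in terms of $F$ rather than arbitrarily small. The compactness step then upgrades finite-range agreement on each $S_k$ into agreement on the full complement of a finite set; once this setup is in place, the contradiction with frozenness is immediate.
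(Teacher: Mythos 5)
Your proof is correct and follows essentially the same strategy as the paper: use strong irreducibility to glue a single recolored vertex to a distant piece of $x$, producing a proper coloring that differs from $x$ only on a finite set. The only difference is cosmetic --- where the paper fixes one boundary sphere $\partial B_M$ and then reglues $x$ outside the ball by hand, you take a sequence of growing annuli $S_k$ and pass to a limit by compactness; your handling of the disjointness condition $\{g_0\}\cap S_k\cdot F=\varnothing$ via $S_k\cap g_0F^{-1}=\varnothing$ is also slightly more careful than the paper's.
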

\begin{proof}
	
	Looking for a contradiction, suppose that $\mathcal{C}_n$ is strongly irreducible and $x\in \mathcal{C}_n$ is a frozen coloring. 
	
	Consider any other configuration $y\in \mathcal{C}_n$ such that $y_{e_{\BS}}\neq x_{e_{\BS}}$. As $\mathcal{C}_n$ is strongly irreducible, there exists $F\subseteq G$ finite such that for any two patterns $p,q\in \mathcal{L}(\mathcal{C}_n)$ with $\mathrm{supp}(p)\cap \mathrm{supp}(q)\cdot F=\varnothing$ we have $[p]\cap [q]\neq \varnothing$. Denote by $|\cdot|$ the word metric in $\BS$, as defined in Subsection \ref{subsection:Cayley}. Considering the patterns $y|_{e_{\BS}}$ and $x|_{\partial B_M}$, where $\partial B_M\coloneqq \{g\in \BS: |g|=M\}$, for $M$ sufficiently large we have $\partial B_M \cap F= \varnothing$. Then there must exist a coloring $z\in \mathcal{C}_n$ such that $z_{e_{\BS}}=y_{e_{\BS}}\neq x_{e_{\BS}}$, and $z|_{\partial B_M}=x|_{\partial B_M}$. Moreover, we can assume that $z|_{B_{M-1}^c}=x|_{B_{M-1}^c}$, where $B_{M-1}\coloneqq \{g\in \BS: |g|\le M-1 \}$, since $z$ and $x$ coincide on $\partial B_M$ and hence re-coloring $z$ as $x$ outside of this ball gives us a proper coloring. 
	
	With this we have found a configuration $z\in \mathcal{C}_n$ which coincides with $x$ outside of a finite set but is different from $x$ inside it, so we have a contradiction with the fact that $x$ is a frozen coloring. Hence we conclude that a strongly irreducible subshift $\mathcal{C}_n$ cannot have a frozen coloring.
\end{proof}
We already know by Theorem \ref{thm:gcs_mixing_n5} that the GCS $\mathcal{C}_n$ for $n\ge 5$ is strongly irreducible, and hence by the above proposition it cannot have a frozen coloring. On the other hand, as the case $n=3$ is the first non-finite subshift of the GCS's $\mathcal{C}_n$ for $N\ge 2$, it is reasonable to conjecture that its properties must still be somewhat rigid. The next proposition confirms this by showing that $\mathcal{C}_3$ possesses a frozen coloring, and with it exhibits the lack of a strong mixing behavior of the GCS with three colors.
\begin{theorem}\label{thm:existence_frozen_3} The GCS $\mathcal{C}_3\subseteq \{0,1,2\}^{\BS}$ admits a frozen coloring, and hence is not strongly irreducible.
\end{theorem}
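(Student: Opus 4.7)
My plan is to prove the theorem by exhibiting an explicit configuration $x \in \mathcal{C}_3$ and showing that no finite modification of $x$ stays in $\mathcal{C}_3$. The construction uses the decomposition of the Cayley graph of $\BS$ into $\mathbb{Z}$-sections described in Section~\ref{section:preliminaries}: I will color each $\mathbb{Z}$-section with a proper $3$-coloring of $\mathbb{Z}$ of period $3$ that cycles through all three colors (a \emph{period-$3$ rainbow}), starting from a base section $\Gamma_e$ with $x(a^k) := k \bmod 3$ and extending level by level, choosing the phase of the rainbow on each new section so that the $b$-edges from the already colored sections are respected.

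For the frozenness argument, suppose $y \in \mathcal{C}_3$ agrees with $x$ outside a finite $F \subset \BS$, and set $F' := \{v \in F : y(v) \ne x(v)\}$. If $v \in F'$ had both $a$-neighbors in the complement of $F'$, then by the rainbow property $y(va) = x(va)$ and $y(va^{-1}) = x(va^{-1})$ would realize the two colors different from $x(v)$, which forces $y(v) = x(v)$ and contradicts $v \in F'$. Hence every $v \in F'$ has an $a$-neighbor in $F'$, so $F' \cap \Gamma$ is a disjoint union of $a$-intervals of length $\ge 2$ in every $\mathbb{Z}$-section $\Gamma$. To rule out these intervals I will use the $b$-edges: the construction is arranged so that the color pattern along the $b$-direction is synchronized with the rainbow pattern along $a$, in such a way that the only local swap compatible with the $a$-edge constraints across an interval is blocked by a $b$-edge constraint. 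By induction on the length of the interval, this yields $F' = \varnothing$, hence $y = x$.

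The main obstacle is the construction itself. The compatibility conditions on the phases of the rainbows reduce to linear congruences modulo $3$ whose solvability depends on $N \bmod 3$. When $N \equiv 1 \pmod 3$ the explicit formula $x(b^{-j}a^kb^i) = k + (i - j) \bmod 3$ on the normal form of Proposition~\ref{prop:normalform_ascending_hnn} gives a well-defined proper $3$-coloring directly, and checking the $a$- and $b$-edge constraints is a one-line computation. For $N \not\equiv 1 \pmod 3$ --- in particular the fundamental case $N = 2$ --- no single linear formula suffices; the phases must be chosen recursively along the tree of $\mathbb{Z}$-sections, so that each new $b$-edge constraint is satisfied as it arises. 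The technical heart of the proof is showing that this recursive procedure is consistent for every $N \ge 2$ and that the resulting $x$ provides the required synchronization between $a$- and $b$-neighbor colors that powers the frozenness argument above.
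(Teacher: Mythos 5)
Your overall strategy---an explicitly defined coloring whose $\mathbb{Z}$-sections carry a translation-invariant pattern, plus an extremal ``corner'' rigidity argument---is the same as the paper's, and for $N\equiv 1\pmod 3$ your formula works (the paper uses $2(i-j)+k\bmod 3$ rather than $(i-j)+k\bmod 3$, but either choice supports the frozenness argument once you use both an $a$-neighbor and a $b$-neighbor of an extremal element of $F$). Two corrections, one minor and one fatal. Minor: for $N\equiv 2\pmod 3$ a single linear formula \emph{does} suffice, namely $x_{b^{-j}a^kb^i}=i-j+k\bmod 3$; properness only needs $N^i\not\equiv 0\pmod 3$, which holds since $N^i$ alternates between $1$ and $2$ mod $3$. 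No recursion is needed there (though the frozenness check must split on the parity of the top level, since the sign of $N^{i}\bmod 3$ determines whether to take the leftmost or rightmost extremal element).

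The fatal problem is $N\equiv 0\pmod 3$: there your framework is not merely hard to implement, it is impossible. If a $\mathbb{Z}$-section $\Gamma_g$ is a period-$3$ rainbow, then the section above it containing $gb$ consists of the vertices $gba^m=ga^{mN}b$, each of whose downward $b$-neighbor is $ga^{mN}$; since $3\mid N$, all of these have the same color $c$, so every vertex of the upper section must avoid $c$ and that section can use only two colors---it cannot be a rainbow. Hence no choice of phases, recursive or otherwise, makes every section a period-$3$ rainbow, and the ``technical heart'' you defer cannot be supplied. (The paper makes a related observation: a configuration with $\sigma_{a^3}(x)=x$ would satisfy $\sigma_{a^N}(x)=x$ and, by Proposition~\ref{prop:ascending HNN extension periodicity}, would have monochromatic rows.) The paper's fix is to abandon the rainbow in this case and use a pattern of period $2$ in $k$, namely $x_{b^{-j}a^kb^i}=(k\bmod 2)+2(i-j)\bmod 3$, so each row alternates between two colors and the pair of colors rotates with the level; since $N^i$ is then odd, $x_{ga}=x_g+1$ and $x_{gb}=x_g+2$, and the same corner argument applies. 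You need an analogous case split, and your interval-based frozenness argument (which leans on both $a$-neighbors realizing the two other colors, i.e.\ on the rainbow property) must also be adapted to this two-colors-per-row pattern.
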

\begin{proof}
	The proof will be divided in three cases, according to the value of $N\ (\mathrm{mod} \ 3)$. In each case we will define explicitly a frozen coloring, by defining the value of each element of $\mathrm{BS}(1,N)$ according to its normal form.

	Let us suppose first that $N=1 \ (\mathrm{mod} \ 3)$, and with it that for every $i\ge 0$ we have $N^i=1 \ (\mathrm{mod} \ 3)$. Define the configuration $x\in \{0,1,2\}^{\BS}$ in the following way. For $g=b^{-j}a^kb^i\in \BS$ written in its normal form:
	
	$$
	x_g\coloneqq 2(i-j)+k \ (\mathrm{mod} \ 3).
	$$
	This configuration is illustrated in Figure \ref{fig:gcs_frozen_3_col_N_1} for $N=4$.
	\begin{figure}
		\centering
		\includegraphics[scale=0.6]{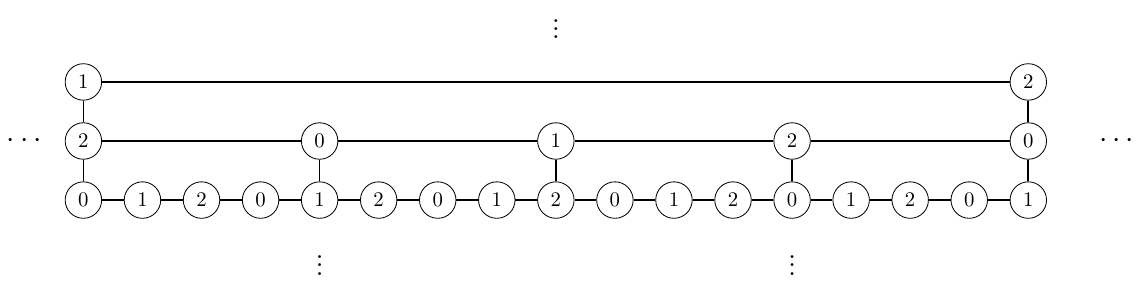}
		\caption{Construction of a frozen $3$-coloring for $N=1\ (\mathrm{mod} \ 3)$. Here $N=4$.}
		\label{fig:gcs_frozen_3_col_N_1}
	\end{figure}
	
	Note that $gb=b^{-j}a^kb^{i+1}$ and that $ga=b^{-j}a^kb^{i}a=b^{-j}a^{k+N^{i}}b^{i}$. Hence, using the fact that $N^i=1 \ (\mathrm{mod} \ 3)$, we see that
	\begin{align*}
	x_{gb}&= x_g+2 \ (\mathrm{mod} \ 3) \text{ and }\\
	x_{ga}&=x_{g}+N^{i} \ (\mathrm{mod} \ 3)=x_{g}+1 \ (\mathrm{mod} \ 3).
	\end{align*}

	Therefore $x_{gb}\neq x_g$ and $x_{ga}\neq x_g$, and so $x\in \mathcal{C}_3$ defines a proper coloring.

	Let us see that $x$ defines a frozen coloring: looking for a contradiction let us suppose $y\in \mathcal{C}_3$ is such that there exists a finite subset $F\subseteq \BS$ with $y|_{F^ c}=x|_{F^c}$ and for every $f\in F: \ x_f\neq y_f$.
	
	Now consider $g$ in $F$ which maximizes the value of $i+k$ in its normal form. In other words,
	$$
	g\coloneqq \mathrm{argmax}\{i+k\mid g=b^{-j}a^kb^i\in F, \ k\in \mathbb{Z}, i,j\ge 0 \}.
	$$
	By definition, we must have that $gb\notin F$ and that $ga\notin F$, since otherwise we would arrive at a contradiction with the maximality of the coordinates of $g$.
	
	As $y$ and $x$ coincide outside of $F$, we see that $y_{gb}=x_{gb}=x_g+2\ (\mathrm{mod} \ 3)$, and $y_{ga}=x_{ga}=x_g+1\ (\mathrm{mod} \ 3)$, Moreover, as $y$ defines a proper coloring we must have $y_g=x_g$. This gives a contradiction since as $g\in F$ we should have $y_g\neq x_g$.
	
	Now suppose that $N=2\ (\mathrm{mod} \ 3)$. Note that now we have that, for $i\ge 0$,
	\begin{equation*}	
	N^i=\left\{ 
	\begin{aligned}
	&1 \ (\mathrm{mod} \ 3) \text{ if }i\text{ is even}, \\
	&2 \ (\mathrm{mod} \ 3) \text{ if }i\text{ is odd.} 
	\end{aligned}
	\right.
	\end{equation*}
	Define a configuration  $x\in \{0,1,2\}^{\BS}$ such that for $g=b^{-j}a^k b^i\in \BS$ written in its normal form:
	$$
	x_g\coloneqq i-j+k \ (\mathrm{mod} \ 3).
	$$
	This configuration is illustrated in Figure \ref{fig:gcs_frozen_3_col_N_2}.
	\begin{figure}
		\centering
		\includegraphics[scale=0.6]{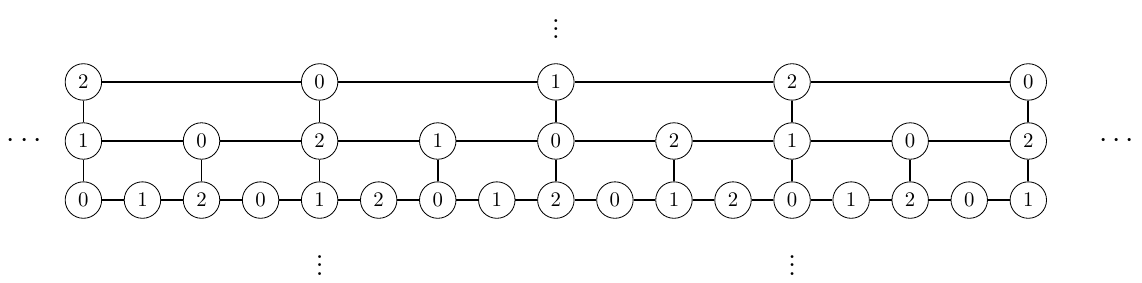}
		\caption{Construction of a frozen $3$-coloring for $N=2\ (\mathrm{mod} \ 3)$. Here $N=2$.}
		\label{fig:gcs_frozen_3_col_N_2}
	\end{figure}	
	
	Then, similar as before, we have
	\begin{align*}
	x_{gb}&=x_g+1 \ (\mathrm{mod} \ 3), \text{ and} \\
	x_{ga}&=x_g+N^i \ (\mathrm{mod} \ 3),
	\end{align*}
	and so $x_{gb}\neq x_g$ and $x_{ga}\neq x_g$ (since $N^i$ can be either $1$ or $2$). With this we have that $x\in \mathcal{C}_3$ defines a proper coloring, so it only remains to prove that $x$ defines a frozen coloring. Looking for a contradiction, suppose there exists $y\in \mathcal{C}_3$ and a finite set $F\subseteq \BS$ such that $x|_{F^c}=y|_{F^c}$ and for every $f\in F: x_f\neq y_f$.
	Define $i_{\mathrm{max}}\coloneqq \max\{ i\ge 0\mid b^{-j}a^kb^i\in F, \text{ for some }k\in \mathbb{Z}, \ j\ge 0 \}$ and $B_{\mathrm{max}}\coloneqq\{g=b^{-j}a^kb^{i_{\mathrm{max}}}\mid\  g\in F\}$.
	
	If $i_{\mathrm{max}}$ is even, let us take $g=b^{-j}a^kb^i\in B_{max}$ which minimizes the value of $k$. Then as $i_{max}$ is even we have $N^{i_{max}}=1  \ (\mathrm{mod} \ 3)$ and with it $y_{gb}=x_{gb}=x_g+1  \ (\mathrm{mod} \ 3)$, and $y_{ga^{-1}}=x_{ga^{-1}}=x_g-N^{i_{max}} = x_g+2  \ (\mathrm{mod} \ 3)$. But then we must have $y_g=x_g$ and this gives a contradiction since $g\in F$ and therefore $x_g\neq y_g$.
	
	Now if $i_{\mathrm{max}}$ is odd, let us consider the element $g=b^{-j}a^kb^i\in B_{max}$ which maximizes the value of $k$. Then as $i_{\mathrm{max}}$ is odd we have $N^{i_{\mathrm{max}}}=2  \ (\mathrm{mod} \ 3)$ and with it $y_{gb}=x_{gb}=x_g+1  \ (\mathrm{mod} \ 3)$, and $y_{ga}=x_{ga}=x_g+N^{i_{\mathrm{max}}} = x_g+2  \ (\mathrm{mod} \ 3)$. But then we must have $y_g=x_g$ and this gives a contradiction since $g\in F$ and therefore $x_g\neq y_g$, proving that $x$ defines a frozen coloring.

	Finally suppose that $N=0 \ (\mathrm{mod} \ 3)$. The method used on the previous two cases to find a frozen coloring on $\mathcal{C}_3$ was to construct a configuration using a function $f(j,k,i)$ of the coefficients of the normal form of every element of the group, which had period $3$ on the variable $k$. This cannot be done in the case $N\in 3\mathbb{Z }$ since here the configuration constructed would satisfy $x=\sigma_{a^3}(x)$ and hence $x=\sigma_{a^N}(x)$ (since $N\in 3\mathbb{Z}$). But then by Proposition \ref{prop:ascending HNN extension periodicity} the coloring $x$ would have to have monochromatic rows appearing on it, which contradicts the fact that $x\in \mathcal{C}_3$ defines a proper coloring. Nonetheless, using a similar but different kind of function we can still manage to prove the existence of a frozen coloring.
	
	In order to make the proof more clear, we will start with the case $N=3$. Define the configuration $x\in \{0,1,2\}^{\BS}$ such that for $g=b^{-j}a^k b^i \in \BS$ written in its normal form:
	$$
	x_g=(k \ \mathrm{mod}\ 2)+ 2(i-j)\ \mathrm{mod}\ 3.
	$$
	
	This configuration is illustrated in Figure \ref{fig:gcs_frozen_3_col_N_0}.
	
	\begin{figure}
		\centering
		\includegraphics[scale=0.6]{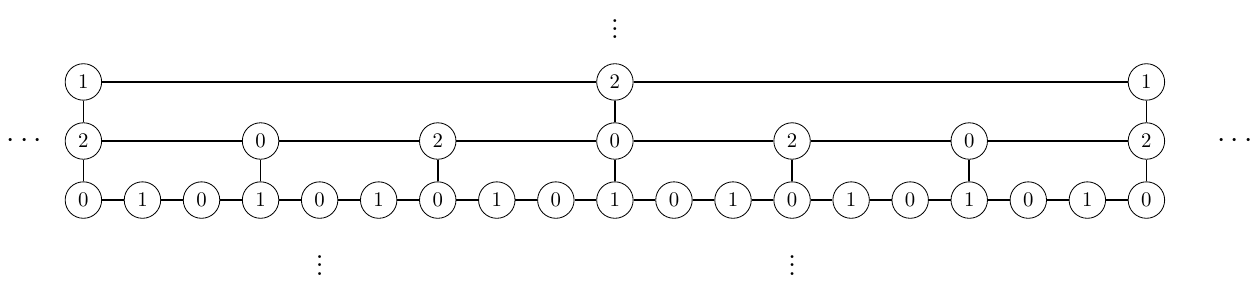}
		\caption{Construction of a frozen $3$-coloring for $N=0\ (\mathrm{mod} \ 3)$. Here $N=3$.}
		\label{fig:gcs_frozen_3_col_N_0}
	\end{figure}

	Then, using that $gb=b^{-j}a^kb^{i+1}$ and $ga=b^{-j}a^{k+3^i}b^{i}$, we have 
	\begin{equation*}
	x_{gb}= (k\ \mathrm{mod}\ 2)+2(i+1-j)\ \mathrm{mod}\ 3=x_{g}+2\  \mathrm{mod} \ 3, \text{ and }
	\end{equation*}
	\begin{align*}
	x_{ga}&=(k+3^{i}\ \mathrm{mod} \ 2)+ 2(i-j) \ \mathrm{mod}\ 3 \\
	&=(k+1\ \mathrm{mod} \ 2)+ 2(i-j) \ \mathrm{mod}\ 3\\
	&=\left\{\begin{aligned}
	x_{g}+1 \ \mathrm{mod}\ 3, &\text{ if }k=0\ \mathrm{mod}\ 2, \\
	x_{g}+2 \ \mathrm{mod}\ 3, &\text{ if }k=1\ \mathrm{mod}\ 2.
	\end{aligned} \right.
	\end{align*}
	Again, in all cases $x_{gb}$ and $x_{ga}$ are different from $x$ and so $x\in \mathcal{C}_3$ defines a proper coloring.
	
	Let us now see that $x$ is a frozen coloring. Suppose that there exists $y\in \mathcal{C}_3$ and $F\subseteq \mathrm{BS}(1,3)$ finite with $x|_{F^{c}}=y|_{F^{c}}$, and such that for any $f\in F$: $x_f\neq y_f$.
	
	Define $k_{\mathrm{max}}\coloneqq\{ k\in \mathbb{Z}\mid \ b^{-j}a^kb^i\in F \}$. We have two cases. If $k_\mathrm{max}=0\ \mathrm{mod}\ 2$, choose $g=b^{-j}a^{k_{\mathrm{max}}}b^i\in F$ which maximizes the value of $i$. Then $gb\notin F$ and $ga\notin F$. But the by definition of $x$ and as $y$ coincides with $x$ outside $F$, we must have
	\begin{equation*}
	y_{gb}=x_{gb}=x_{g}+2\ \mathrm{mod}\ 3, \text{ and  } 
	y_{ga}=x_{ga}=x_{g}+1\ \mathrm{mod}\ 3.
	\end{equation*}
	This is a contradiction, since it implies that $y_g=x_g$, for $g\in F$.

	The case $k_{\mathrm{max}}=1\ \mathrm{mod}\ 2$ is similar. Choose $g=b^{-j}a^{k_\mathrm{max}}b^{i}\in F$ which minimizes $(i-j)$. Then $ga\not \in F$ and so $y_{ga}=x_{ga}=x_{g}+2\ \mathrm{mod}\ 3$. On the other hand, we also have that $gb\not \in F$.
	If $i\ge 1$ this implies that
	\begin{align*}
	x_{gb^{-1}}&=(k\ \mathrm{mod\ 2})+2(i-j-1)\ \mathrm{mod}\ 3\\
	&=(k\ \mathrm{mod\  2})+2(i-j)+1\ \mathrm{mod}\ 3\\
	&=x_g+1\ \mathrm{mod}\ 3.
	\end{align*}
	Finally, if $i=0$, then 
	\begin{align*}
	x_{gb^{-1}}&=x_{b^{-j-1}}a^{k_{\mathrm{max}}\cdot 3}\\
	&=(k_{\mathrm{max}}\cdot 3 \ \mathrm{mod}\ 2)+2(-j-1)\ \mathrm{mod}\ 3\\
	&=x_g+1\ \mathrm{mod}\ 3.
	\end{align*}
	
	In all cases, $y_{gb^{-1}}=x_{gb^{-1}}=x_{g}+1\ \mathrm{mod} \ 3$. As we had already seen that $y_{ga}=x_{g}+2\ \mathrm{mod}\ 3$, this implies that $y_g=x_g$, which is a contradiction since $g\in F$.

	Now we can do the general case, that is, suppose that $N=0 \ \mathrm{mod} \ 3$, for $N \ge 6$. Define the configuration $x\in \{0,1,2\}^{\BS}$ by assigning to each $g=b^{-j}a^kb^{i}\in \BS$ written in its normal form:
	$$
	x_g=(k\ \mathrm{mod}\ (N-1))+ 2(i-j)\ \mathrm{mod}\  3.
	$$ 
	
	Then $x_{gb}=x_g+2 \ \mathrm{mod} \ 3$, and
	\begin{align*}
	x_{ga}&=(k+N^i \ \mathrm{mod}\ (N-1))+2(i-j)\ \mathrm{mod}\ 3\\
	&=(k+1 \ \mathrm{mod}\ (N-1))+2(i-j)\ \mathrm{mod}\ 3\\
	&=\left\{
	\begin{aligned}
	x_g+1 \ \mathrm{mod}\ 3 &\text{ if } 0\le k\ \mathrm{mod}\ (N-1) <N-2\\
	x_g-(N-2) \ \mathrm{mod}\ 3 &\text{ if }  k=N-2 \ \mathrm{mod}\ (N-1).
	\end{aligned}
	\right.
	\end{align*}	
	
	Hence, using that $N=0\ \mathrm{mod}\ 3$, we have that
	$$
	x_{ga}=\begin{cases}
	x_g+1 \ \mathrm{mod} \ 3&\text{ if } 0\le k\ \mathrm{mod}\ (N-1) <N-2\\
	x_{g}+2 \ \mathrm{mod} \ 3, &\text{ if }k=N-2 \ \mathrm{mod}\ (N-1).
	\end{cases}
	$$	
	We conclude that $x\in \mathcal{C}_3$ defines a proper coloring.
	
	Now let us prove that $x$ is a frozen coloring. Suppose there exists $y\in \mathcal{C}_3$ and a finite subset $F\subseteq\BS$ with $x|_{F^{c}}=y|_{F^c}$ and such that for every $f\in F$: $x_f\neq y_f$.
	
	Define $k_{\mathrm{max}}\coloneqq \max\{k\in \mathbb{Z}\mid \ b^{-j}a^kb^i\in F \}$. We have two cases.
	
	First, suppose that $k_{\mathrm{max}} \ \mathrm{mod} (N-1)<N-2$. Then choose an element $g=b^{-j}a^kb^i\in F$ which maximizes $i$, so that $gb\notin F$ and $ga\notin F$. We conclude that $y_{gb}=x_{gb}=x_g+2\ \mathrm{mod}\ 3$ and that $y_{ga}=x_{ga}=x_g+1\ \mathrm{mod}\ 3$, which leads to the  equality $x_g=y_g$. This is a contradiction with the choice of $y$.
	
	Now suppose that $k_{\mathrm{max}}=N-2\ \mathrm{mod}\ (N-1)$. Then choose $g=b^{-j}a^{k_{\mathrm{max}}}b^i\in F$ which minimizes the value of $(i-j)$. As $ga\notin F$, we have that $y_{ga}=x_{ga}=x_g+2\ \mathrm{mod}\ 3$. On the other hand, by the choice of $g$, we have that $gb^{-1}\notin F$.
	
	If $i\ge 1$, then $gb^{-1}=b^{-j}a^kb^{i-1}$ and so
	\begin{align*}
	y_{gb^{-1}}=x_{gb^{-1}}&=(k\ \mathrm{mod}\ (N-1))+2(i-1-j)\ \mathrm{mod}\ 3\\
	&=x_g+1\ \mathrm{mod}\ 3.
	\end{align*}
	Meanwhile, if $i=0$ then $gb^{-1}=b^{-j}a^kb^{-1}=b^{-j-1}a^{kN}$, and so
	$$
	y_{gb^{-1}}=x_{gb^{-1}}=(kN \ \mathrm{mod}\ (N-1))+2(-j-1)\ \mathrm{mod}\ 3=x_{g}+1\ \mathrm{mod}\ 3.
	$$
	In both cases, we conclude that $y_g=x_g$ and this is a contradiction.

	To finish the proof we simply use the previous proposition to see that $\mathcal{C}_3$ cannot be strongly irreducible, as we have constructed a frozen coloring in it.
\end{proof}

This last theorem together with the previous comments settle the existence of frozen colorings for $n=3$ and $n\ge 5$. To tackle the case of four colors, and in the process give an alternative proof of the lack of frozen colorings for $n\ge 5$, we will use a proposition from \cite{alon2019mixing} used in that paper to prove the lack of frozen $n$-colorings in $\mathbb{Z}^d$ for $n\ge d+2$.

\begin{proposition}[{\cite[Proposition~2.2]{alon2019mixing}}]\label{prop:nofroz_graph} For a graph $\Gamma$ let us define its \textit{edge-isoperimetric constant} by
	$$
	i_e(\Gamma)\coloneqq \inf_{F\subseteq \Gamma \text{ finite}}\frac{|E(F,\Gamma\backslash F)|}{|F|},
	$$
	where $E(F,\Gamma\backslash F)$ are the edges of $\Gamma$ connecting vertices from $F$ to $\Gamma\backslash F$. Denote by $\Delta$ the maximum degree of $\Gamma$. Then for every $n>\frac{1}{2}\Delta+\frac{1}{2}i_e(\Gamma)+1$ there do not exist frozen $n$-colorings of $\Gamma$.
\end{proposition}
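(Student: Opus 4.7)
The plan is to argue via Kempe chains, using the elementary observation that swapping two colors along a connected component of the corresponding bicolored subgraph preserves properness. Concretely, for a proper $n$-coloring $x$ and distinct colors $c_1, c_2$, a $\{c_1,c_2\}$-Kempe chain is a connected component of the subgraph of $\Gamma$ induced by vertices whose color lies in $\{c_1,c_2\}$. Swapping the two colors throughout such a chain produces another proper coloring of $\Gamma$: internal bicolored edges stay bichromatic, while any neighbor outside the chain has color outside $\{c_1,c_2\}$, so no conflict is created. Consequently, if $x$ is frozen, every Kempe chain of $x$ must be infinite, since otherwise the swap on a finite chain would yield a proper coloring differing from $x$ precisely on a nonempty finite set.

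Next, I would fix an arbitrary finite $F \subseteq V(\Gamma)$ and, for each unordered pair $\{c_1,c_2\}$, consider the connected components of the subgraph of $\Gamma[F]$ spanned by vertices colored $c_1$ or $c_2$. Each such component $C$ must be incident to at least one edge of $E(F, \Gamma \setminus F)$ whose outer endpoint is also colored in $\{c_1,c_2\}$; otherwise the full Kempe chain of $\Gamma$ containing $C$ would equal $C$ itself, contradicting frozenness. Since each edge of $E(F, \Gamma \setminus F)$ has a well-defined unordered pair of endpoint colors, distinct color pairs account for disjoint subsets of these ``exit'' edges, giving
$$
\sum_{\{c_1,c_2\}} \#\{\text{components for pair } \{c_1,c_2\}\} \le |E(F,\Gamma \setminus F)|.
$$

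Then I would lower-bound the left-hand side using the elementary inequality $\#\text{components}(H) \ge |V(H)| - |E(H)|$ applied to each bicolored subgraph. Summing over pairs, each vertex of $F$ is counted exactly $n-1$ times (once per pair containing its color), while each edge of $\Gamma[F]$ is counted exactly once (by the pair determined by its endpoints), so
$$
\sum_{\{c_1,c_2\}} \#\text{components} \ge (n-1)|F| - e(\Gamma[F]).
$$
Combining both inequalities and using the degree identity $2\,e(\Gamma[F]) + |E(F,\Gamma\setminus F)| = \sum_{v \in F} \deg(v) \le \Delta|F|$ yields
$$
2(n-1) \le \Delta + \frac{|E(F,\Gamma\setminus F)|}{|F|}.
$$
Taking the infimum over finite $F$ produces $2(n-1) \le \Delta + i_e(\Gamma)$, contradicting the assumption $n > \tfrac{1}{2}\Delta + \tfrac{1}{2} i_e(\Gamma) + 1$.

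The main obstacle is conceptual rather than computational: the key is to identify the right quantity to count. Once one recognizes that frozenness forces every bicolored component in $F$ to consume at least one boundary edge, and that different color pairs consume disjoint edges, the rest of the argument is a mostly mechanical combination of $\#\text{components} \ge |V| - |E|$ with the degree sum formula. Everything hinges on the Kempe-chain rigidity imposed by frozenness; without that observation, there is no handle to connect the coloring to the edge-isoperimetric quantity $i_e(\Gamma)$.
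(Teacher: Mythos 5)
The paper does not prove this statement at all---it is imported verbatim as Proposition~2.2 of \cite{alon2019mixing}---so there is no internal proof to compare against. Your argument is correct and complete: the Kempe-chain rigidity forced by frozenness, the disjointness of exit edges across color pairs (guaranteed because a boundary edge's two endpoint colors determine the pair, and its inner endpoint determines the component), the bound $\#\mathrm{components}(H)\ge |V(H)|-|E(H)|$ summed to give $(n-1)|F|-e(\Gamma[F])$, and the degree-sum identity combine exactly to $2(n-1)\le \Delta+i_e(\Gamma)$, contradicting the hypothesis. This is, to the best of my knowledge, the same Kempe-chain counting strategy used in the cited source, so you have in effect supplied the omitted proof rather than found a new route.
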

\begin{theorem}\label{thm:no_frozen_n_ge_4} For $n\ge 4$ the GCS $\mathcal{C}_n$ does not admit a frozen coloring.
\end{theorem}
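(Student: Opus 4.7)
The plan is to invoke Proposition \ref{prop:nofroz_graph} applied to the Cayley graph $\Gamma = \Gamma(\BS, \{a,b\})$, so the main task reduces to computing (or bounding) the two parameters $\Delta$ and $i_e(\Gamma)$ that appear on the right-hand side of the inequality $n > \frac{1}{2}\Delta + \frac{1}{2}i_e(\Gamma) + 1$. Once we show that $\Delta = 4$ and $i_e(\Gamma) = 0$, the proposition immediately rules out frozen $n$-colorings for every $n > 3$, i.e.\ for every $n \ge 4$, which is exactly the statement to prove.

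For the maximum degree, I would verify that at every vertex $g \in \BS$ the four elements $ga, ga^{-1}, gb, gb^{-1}$ are pairwise distinct. Indeed, $ga = ga^{-1}$ would force $a^2 = e_{\BS}$, contradicting the fact that $a$ has infinite order in $\BS$, while equalities of the form $ga^{\pm 1} = gb^{\pm 1}$ would yield $a = b^{\pm 1}$ or $a^{-1} = b^{\pm 1}$, which are false by the uniqueness of the normal form in Proposition \ref{prop:normalform_ascending_hnn}. Hence $\Gamma$ is $4$-regular as an undirected simple graph and $\Delta = 4$.

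For the edge-isoperimetric constant, I would exploit that $\BS$ is amenable, with the explicit Følner sequence $\{R_m\}_{m \ge 1}$ furnished by Proposition \ref{prop:rectangles_and_folner}. Since $\{R_m\}$ is Følner for the generating set $S = \{a, b\}$, the inner vertex boundary $\partial^{\text{in}} R_m := \{g \in R_m : gs \notin R_m \text{ for some } s \in S \cup S^{-1}\}$ satisfies $|\partial^{\text{in}} R_m|/|R_m| \to 0$. Each vertex in the inner boundary contributes at most $\Delta = 4$ edges to $E(R_m, \Gamma \setminus R_m)$, so
\[
\frac{|E(R_m, \Gamma \setminus R_m)|}{|R_m|} \le \frac{4 |\partial^{\text{in}} R_m|}{|R_m|} \xrightarrow[m\to\infty]{} 0,
\]
which forces $i_e(\Gamma) = 0$. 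Plugging $\Delta = 4$ and $i_e(\Gamma) = 0$ into Proposition \ref{prop:nofroz_graph} yields the conclusion for every $n > 3$, finishing the proof. There is no serious obstacle here: the whole argument is a computation of two elementary invariants of $\Gamma(\BS, \{a,b\})$ feeding into the already-cited general criterion.
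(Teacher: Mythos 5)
Your proof is correct and follows the same overall strategy as the paper: both apply Proposition \ref{prop:nofroz_graph} to $\Gamma=\Gamma(\BS,\{a,b\})$ with $\Delta=4$ and reduce everything to showing $i_e(\Gamma)=0$. The difference lies in how that last fact is verified. The paper computes $\gamma_m=|E(R_m,\Gamma\setminus R_m)|$ exactly, via the recursion $\gamma_k=2+N\gamma_{k-1}$ coming from the self-similar structure of the rectangles, obtaining $\gamma_m=2\frac{N^{m+1}-1}{N-1}$ and hence $\gamma_m/|R_m|\to 0$. You instead observe that the Følner property of $\{R_m\}_{m\ge 1}$ (Proposition \ref{prop:rectangles_and_folner}) already forces $|\partial^{\mathrm{in}}R_m|/|R_m|\to 0$, and that the edge boundary is at most $\Delta$ times the inner vertex boundary; this is the standard soft argument showing that every Cayley graph of an amenable group has vanishing edge-isoperimetric constant. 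Your route is shorter and more general (it would apply verbatim to the ascending HNN-extensions discussed in Section \ref{section: further questions}), but it leans on Proposition \ref{prop:rectangles_and_folner}, which the paper states without proof; the paper's explicit count is self-contained and in effect establishes the relevant part of the Følner property along the way. Your verification that $\Delta=4$ is fine, though strictly speaking only the bound $\Delta\le 4$ is needed, since a smaller maximum degree would only weaken the hypothesis required of $n$.
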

\begin{proof}
	Denoting by $\Gamma$ the Cayley graph of $\BS$ its maximum degree is $\Delta=4$, and if we prove that $i_e(\Gamma)=0$ we will have that, using proposition \ref{prop:nofroz_graph}, for $n>\frac{1}{2}\cdot 4+\frac{1}{2}\cdot 0+1=3$ this graph does not admit frozen $n$-colorings, proving the statement.
	
	Let us see that $i_e(\Gamma)=0$. Consider for every $k\ge 1: \ \gamma_{k}\coloneqq |E(R_k,\Gamma\backslash R_k)|$. We see that $\gamma_1=2N+2$, and that for every $k\ge 2$:
	$$
	\gamma_k=N^k+2+N(\gamma_{k-1}-N^{k-1})=2+N\gamma_{k-1},
	$$
	by using the fact that the $N$ sheets arising from the base of the rectangle $R_k$ are copies of the rectangle $R_{k-1}$. 
	%
	By solving this recursion we arrive at $\gamma_{m}=2\frac{N^{m+1}-1}{N-1}$, and with it give an estimate for the edge-isoperimetric constant:
	$$
	i_e(\Gamma)\le \liminf_{m\to \infty}\frac{|E(R_m,\Gamma\backslash R_m|)}{|R_m|}=\liminf_{m\to \infty}\frac{\gamma_m}{|R_m|}=\liminf_{m\to \infty}\frac{2}{mN^m}\frac{N^{m+1}-1}{N-1}=0.
	$$
	Hence $i_e(\Gamma)=0$, as we had claimed at the beginning of the proof.
\end{proof}


\subsection{Topological entropy}
We finish this section by focusing our attention on the topological entropy of the GCS $\mathcal{C}_n$. By estimating in how many different ways a pattern defined on a rectangle can be extended to a bigger one, we obtain the following bounds.
\begin{theorem}\label{thm:gcs_entropy_estimates} For $n\ge 3$, we have the following estimate for the topological entropy of the GCS $\mathcal{C}_n$:
	$$
	\log(n-2)\le \htop(\mathcal{C}_n)\le \log(n-1).
	$$
\end{theorem}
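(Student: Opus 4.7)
The plan is to estimate both bounds using the F\o lner sequence $\{R_m\}_{m\ge 1}$ of Proposition \ref{prop:rectangles_and_folner} (with $|R_m|=mN^m$) and to invoke Proposition \ref{prop:gcs_rectangle_extension}, so that $|\mathcal{L}_{R_m}(\mathcal{C}_n)|$ is precisely the number of proper $n$-colorings of the subgraph of the Cayley graph of $\BS$ induced on $R_m$. The key structural input is that the HNN relation $ba=a^Nb$ yields $b^ia=a^{N^i}b^i$, so the two $a$-neighbors of $a^kb^i$ are $a^{k\pm N^i}b^i$. Consequently, at level $i$ the intersection of any $\mathbb{Z}$-section of $\BS$ with $R_m$ consists of a single path on the vertices $a^{r+jN^i}b^i$ with $0\le j<N^{m-i}$, parameterized by a residue $r\in\{0,\ldots,N^i-1\}$; there are $N^i$ such paths at level $i$, each of length $N^{m-i}$.

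For the upper bound $\htop(\mathcal{C}_n)\le\log(n-1)$, I would observe that $R_m$ is a connected subgraph (each $a^kb^i$ with $i\ge 1$ is joined to the level-$0$ row by $i$ successive $b^{-1}$-edges lying in $R_m$), hence its $mN^m$ vertices admit an enumeration $v_1,\ldots,v_{mN^m}$ in which each $v_j$ with $j\ge 2$ has some previously-enumerated neighbor. Counting proper colorings greedily in this order gives at most $n$ choices at $v_1$ and at most $n-1$ at each subsequent $v_j$, so $|\mathcal{L}_{R_m}(\mathcal{C}_n)|\le n(n-1)^{mN^m-1}$. Taking logarithm, dividing by $|R_m|=mN^m$, and letting $m\to\infty$ gives $\log(n-1)$.

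For the lower bound $\log(n-2)\le\htop(\mathcal{C}_n)$, I would produce proper colorings of $R_m$ greedily, filling in one level at a time from bottom to top, and traversing within each level the $N^i$ partial sections one after another, each left to right. When we color the vertex $a^{r+jN^i}b^i$, its possible already-colored neighbors inside $R_m$ are only the $b^{-1}$-neighbor $a^{r+jN^i}b^{i-1}$ (present when $i\ge 1$) and the $a^{-1}$-neighbor $a^{r+(j-1)N^i}b^i$ (present when $j\ge 1$; when $j=0$ the vertex $a^{r-N^i}b^i$ has negative $a$-exponent since $r<N^i$, and so lies outside $R_m$). Hence at most two neighbors are already colored, leaving at least $n-2$ valid choices regardless of whether the two forbidden colors coincide. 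Multiplying these lower bounds over all $mN^m$ vertices yields $|\mathcal{L}_{R_m}(\mathcal{C}_n)|\ge (n-2)^{mN^m}$, from which the bound follows at once.

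The main subtlety is establishing the structural decomposition of each level of $R_m$ into $N^i$ disjoint $a$-paths of length $N^{m-i}$, whose correctness rests on the identity $b^ia=a^{N^i}b^i$ derived from the defining HNN relation, together with the boundary bookkeeping ensuring that the leftmost vertex of each partial section has no $a^{-1}$-neighbor inside $R_m$; once this is in hand, both bounds reduce to short greedy vertex-counting arguments.
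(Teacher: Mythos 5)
Your argument is correct. The lower bound is essentially the paper's: the paper also colors $R_m$ level by level (top to bottom rather than your bottom to top) so that each vertex has at most two already-colored neighbors, yielding $|\mathcal{L}_{R_m}(\mathcal{C}_n)|\ge (n-2)^{|R_m|}$ after invoking Proposition \ref{prop:gcs_rectangle_extension} to ensure global admissibility; your bookkeeping with the decomposition of level $i$ into $N^i$ arithmetic-progression paths of step $N^i$ just makes explicit what the paper leaves implicit. Your upper bound, however, is genuinely different and arguably cleaner: the paper extends colorings of $R_m$ to $R_{m+1}$ with at most $n-1$ choices on each of the $|R_{m+1}\setminus R_m|=N^m(mN+N-m)$ new vertices, obtaining the recursive inequality $|\mathcal{L}_{R_{m+1}}(\mathcal{C}_n)|\le|\mathcal{L}_{R_m}(\mathcal{C}_n)|(n-1)^{|R_{m+1}\setminus R_m|}$ and then solving $\htop(\mathcal{C}_n)\le\frac{1}{N}\htop(\mathcal{C}_n)+\frac{N-1}{N}\log(n-1)$ in the limit, whereas you use connectivity of $R_m$ to get the one-shot bound $|\mathcal{L}_{R_m}(\mathcal{C}_n)|\le n(n-1)^{|R_m|-1}$ from a spanning-tree enumeration. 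Your version avoids the limit manipulation entirely at the cost of the (easy) connectivity check; the paper's version only needs to understand the annulus $R_{m+1}\setminus R_m$ and the ratio $|R_m|/|R_{m+1}|\to 1/N$. Both are sound, and both correctly use that locally admissible colorings of $R_m$ upper-bound, and globally extendable ones lower-bound, $|\mathcal{L}_{R_m}(\mathcal{C}_n)|$.
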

\begin{proof}
	Let us see that $\htop(\mathcal{C}_n)\le \log(n-1)$: a coloring of the rectangle $R_m$ may be extended to a coloring of the rectangle $R_{m+1}$ by coloring the remaining vertices having on each one at most $n-1$ options. With this
	\begin{align*}
	|\mathcal{L}_{R_{m+1}}(\mathcal{C}_n)|\le |\mathcal{L}_{R_m}(\mathcal{C}_{n})|(n-1)^{|R_{m+1}\backslash R_m|}.
	\end{align*}
	A simple calculation shows that $|R_{m+1}\backslash R_{m}|=N^m(mN+N-m)$. 
	With this:
	\begin{align*}
	\frac{1}{|R_{m+1}|}\log|\mathcal{L}_{R_{m+1}}(\mathcal{C}_n)|&=\frac{1}{(m+1)N^{m+1}}\log|\mathcal{L}_{R_{m+1}}(\mathcal{C}_n)|\\
	&\le \frac{1}{(m+1)N^{m+1}}\log|\mathcal{L}_{R_m}(\mathcal{C}_{n})|+\frac{N^m(mN+N-m)}{(m+1)N^{m+1}}\log(n-1)\\
	&=\frac{1}{N}\frac{m}{m+1}\frac{1}{mN^{m}}\log|\mathcal{L}_{R_m}(\mathcal{C}_{n})|+ \frac{mN+N-m}{m+1}\frac{1}{N}\log(n-1)\\
	&=\frac{1}{N}\frac{m}{m+1}\frac{1}{|R_m|}\log|\mathcal{L}_{R_m}(\mathcal{C}_{n})|+ \frac{m(N-1)+N}{m+1}\frac{1}{N}\log(n-1).
	\end{align*}
	Taking the limit $m\to \infty$ we arrive at
	\begin{align*}
	\htop(\mathcal{C}_n)&=\lim_{m\to \infty }	\frac{1}{|R_{m+1}|}\log|\mathcal{L}_{R_{m+1}}(\mathcal{C}_n)|\\
	&\le \lim_{m\to \infty }\frac{1}{N}\frac{m}{m+1}\frac{1}{|R_m|}\log|\mathcal{L}_{R_m}(\mathcal{C}_{n})| +\lim_{m\to \infty }\frac{m(N-1)+N}{m+1}\frac{1}{N}\log(n-1)\\
	&= \frac{1}{N}\htop(\mathcal{C}_n)+\frac{N-1}{N}\log(n-1),
	\end{align*}
	from where 
	$$\htop(\mathcal{C}_n)\le \log(n-1).$$

	Now let us see that $\htop(\mathcal{C}_n)\ge \log(n-2)$:
	the rectangle $R_m$ can be colored starting from the upper levels to the lower levels, ensuring at least $n-2$ color options at each element, since in this way each vertex of the Cayley graph has at most two neighbors already colored. Then this coloring can be extended to the whole graph by using Proposition \ref{prop:gcs_rectangle_extension} and hence forming a globally admissible configuration for $\mathcal{C}_n$. From this we arrive at
	\begin{align*}
	|\mathcal{L}_{R_m}(\mathcal{C}_{n})|\ge (n-2)^{|R_m|},
	\end{align*}
	and so $\htop(\mathcal{C}_n)\ge \log(n-2)$.
\end{proof}
In particular, the lower bound from the previous proposition shows us that $\htop(\mathcal{C}_n)>0$ for every $n\ge 4$, but gives us no new information for the case of three colors $\mathcal{C}_3$, since by definition $\htop(\mathcal{C}_3)\ge 0$. In the next proposition we show that indeed $\htop(\mathcal{C}_3)>0$, dividing the proof in two cases depending on the parity of $N$. If $N$ is odd we can exploit the fact that the Cayley graph of $\BS$ is bipartite (we know this since we have already constructed a $2$-coloring of it in Proposition \ref{prop:GCS_nonemptiness}, or equivalently observing that it has no odd cycles) to give a straightforward proof, while in the case of even $N$ we give a more delicate construction to arrive at the same result.
\begin{theorem}\label{thm:C_3_has_positive_entropy} The GCS $\mathcal{C}_3\subseteq \{0,1,2\}^{\BS}$ has positive topological entropy.
\end{theorem}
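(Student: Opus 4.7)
The proof splits on the parity of $N$, matching the two refined bounds stated in Theorem \ref{thm:summary_gcs_entropy}, and in each case the task is to produce a family of proper $3$-colorings whose restrictions to $R_m$ grow exponentially in $|R_m| = m N^m$.

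When $N$ is odd, the Cayley graph $\Gamma(\BS, \{a,b\})$ is bipartite: Proposition \ref{prop:GCS_nonemptiness} produces a proper $2$-coloring $x^* \in \mathcal{C}_2$ defined by $x^*_g = i+j+k \bmod 2$, and its fibers $V_0, V_1$ partition $\BS$ into the two sides. I would fix color $0$ at every vertex of $V_0$ and choose, independently at each $g \in V_1$, a color in $\{1, 2\}$. Every such choice lies in $\mathcal{C}_3$ since each edge of $\Gamma$ joins a $V_0$-vertex to a $V_1$-vertex, so colors $0$ and $\{1,2\}$ automatically differ, and two different assignments on $V_1 \cap R_m$ produce two different patterns on $R_m$, hence $|\mathcal{L}_{R_m}(\mathcal{C}_3)| \ge 2^{|V_1 \cap R_m|}$. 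The explicit formula for $x^*$ forces $|V_1 \cap R_m| = \tfrac{1}{2}|R_m| + O(m)$, and dividing by $|R_m|$ and taking the F\o lner limit yields $\htop(\mathcal{C}_3) \ge \tfrac{1}{2}\log 2$.

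When $N$ is even, $\mathcal{C}_2 = \varnothing$ so the bipartite shortcut is unavailable, and one must construct colorings explicitly. The plan starts from the observation already used in the proof of Proposition \ref{prop:GCS_nonemptiness}: when $N$ is even, if an $a$-row is colored alternately with two colors, then each of the $\mathbb{Z}$-sections directly above it sees through its $b$-edges only one color from below, and can itself be $2$-colored with the remaining pair. Iterating this yields a rigid two-color \emph{backbone} propagating along the tree of $\mathbb{Z}$-sections. Into this backbone I would insert local \emph{switching blocks} of length $2(2N+1)$ along each $\mathbb{Z}$-section, inside which the two-color palette flips from one pair to another in one of two possible orientations, using the third color transiently. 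Each block carries an independent binary bit, and since there are $N^i$ sections at level $i$ of $R_m$, each contributing roughly $N^{m-i}/(2(2N+1))$ bits, the total is $|R_m|/(2(2N+1))$ independent bits, giving $|\mathcal{L}_{R_m}(\mathcal{C}_3)| \ge 2^{|R_m|/(2(2N+1))}$ and hence $\htop(\mathcal{C}_3) \ge \tfrac{1}{2(2N+1)}\log 2 > 0$.

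The main obstacle is the even case: one must verify that the binary switches in different blocks are truly independent. Concretely, a switch inside a block must not propagate horizontally through the $a$-edges of its own section (whose consecutive elements sit $N^i$ apart in the Cayley graph at level $i$, which is what forces the spacing $2N+1$) nor vertically through the $b$-edges to the adjacent levels (which is what forces the extra factor of $2$). I plan to achieve this by designing each switching block to be self-contained: the three colors appearing inside it are prescribed so that both orientations of the switch produce exactly the same coloring on the boundary of the block, and therefore are invisible to the surrounding two-color backbone. Once this local self-containment is made precise, the independence of the choices and the claimed entropy bound follow from a direct count.
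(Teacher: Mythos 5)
Your treatment of the odd case is correct and is essentially the paper's argument: both exploit bipartiteness of the Cayley graph of $\BS$, freeze one side of the bipartition at a single color, and leave an independent choice between the two remaining colors on the other side, which occupies at least half of $R_m$, yielding $\htop(\mathcal{C}_3)\ge\tfrac12\log 2$. (Your version is, if anything, slightly cleaner: fixing color $0$ on all of $V_0$ produces configurations defined on the whole group, so no separate extendability step is needed.)

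The even case, however, contains a genuine gap: the entire content of that case is the explicit local gadget, and you never construct it. You correctly identify the two obstructions (horizontal propagation along a $\mathbb{Z}$-section, whose consecutive elements at level $i$ sit $N^i$ apart in the $k$-coordinate, and vertical propagation through the $b$-edges), and you state the properties a switching block must have, but the sentence ``once this local self-containment is made precise\dots'' is exactly where the proof lives. Your proposed gadget --- a block inside which ``the two-color palette flips from one pair to another'' --- is also harder to make self-contained than necessary, because a change of palette along a section is seen by the $N$ sections above it and the one below it through the $b$-edges; this is precisely the propagation you acknowledge but do not rule out. The paper instead uses the simplest possible gadget: it writes down an explicit pattern on $R_{2m}$ in which the base row carries the periodic word $\left(0(12)^{2N}\right)^{\infty}$, the even levels copy the base, and each odd level carries a $(2N+1)$-periodic template containing one distinguished vertex per period, all of whose neighbors are assigned the symbol $0$; that vertex may then be colored $1$ or $2$ independently of every other choice, giving $m\left\lfloor N^{2m}/(2N+1)\right\rfloor$ manifestly independent bits in $R_{2m}$ and hence the bound $\tfrac{1}{2(2N+1)}\log 2$. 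Until you exhibit a concrete block and verify properness of both of its states against all four neighbor directions, your even case is a plan rather than a proof.
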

\begin{proof}
	Let us start with the case of odd $N$. Here the Cayley graph of $\BS$ is bipartite and hence so is every rectangle $R_m$, for $m\ge 1$. Consider a \textit{partition} of $R_m$ into two sets $A$ and $B$, meaning that all edges of the graph are composed of a vertex in $A$ and a vertex in $B$. Then one of them, which we take to be $A$ without loss of generality, must have cardinality at least $\frac{1}{2}|R_m|$. Then we can create proper colorings of $R_m$ by coloring the vertices of $B$ with one color and have the freedom to choose between two colors for every vertex of $A$, and then extend this pattern on $R_m$ to the rest of the group as was said earlier.
	
	With the above we can estimate a lower bound for the number of proper colorings of $R_m$ by
	$$
	|\mathcal{L}_{R_m}(\mathcal{C}_3)|\ge 2^{|A|}\ge 2^{\frac{1}{2}|R_m|}.
	$$
	Then taking logarithm and dividing by $|R_m|$ we arrive at
	$$
	\frac{1}{|R_m|}\log |\mathcal{L}_{R_m}(\mathcal{C}_3)| \ge \frac{1}{2}\log (2),
	$$
	to finally take limit as $m\to \infty$ and obtain
	$$
	\htop(\mathcal{C}_3)\ge \frac{1}{2}\log(2)>0,
	$$
	which is what we wanted.

	Now consider the case of even $N$. For $m\ge 1$ we define a pattern $p\in \mathcal{A}^{R_{2m}}$, where some of the vertices of this rectangle will have the freedom of being assigned the symbol $1$ or $2$, in either case remaining a proper $3$-coloring of $R_{2m}$.
	
	First let us define 
	$$
	p|_{[e_{\BS},a^{N^{2m}-1}]}=(0(12)^{N^2/2})^{\infty}|_{[0,N^{2m}-1]}.
	$$
	That is, we are putting in the elements $\{a^{0},a^{1},\ldots,a^{N^{2m}-1}\}$ an initial segment of the infinite $(N^2+1)$-periodic sequence $\left(0(12)^{N^2/2}\right)^{\infty}$. Moreover, we will copy this sequence upwards into even levels: for $k\in \{0,\ldots,N^{2m}-1\}$ and $0\le i<m$ we define
	$$
	p_{a^{k}b^{2i}}=p_{a^k}.
	$$
	
	Note that $a^kb^{2i}a=a^{k+N^{2i}}b^{2i}$ and that $N^{2i}=(N^{2})^i=(-1)^{i}\ \mathrm{mod}(N^2+1)$. This means that at level $2i$ we see either (a shift of) the sequence $(0(12)^{N^2/2})^{\infty}$ or $(0(21)^{N^2/2})^{\infty}$. In particular, up until now $p$ satisfies the rules for being a proper coloring.
	
	It remains to define $p$ at odd levels. Fix $0\le i<m$, and consider the level $2i+1$. We will define $p$ at this level so that the value $p_{a^{(N^2+1)k}b^{2i+1}}$, $0\le k<\frac{N^{2m}}{N^2+1}$, can take any value in $\{1,2\}$ and still define a proper coloring. In other words, the element $a^{(N^2+1)k}b^{2i+1}$ will have all its neigbors assigned the color $0$.
	
	Indeed, define for $i$ and $k$ as above, for $0\le \ell <N^2+1$, and for any choice $\alpha(i,\ell,k)\in\{1,2\}$:
	$$
	p_{a^{(N^2+1)k+\ell}b^{2i+1}}=\begin{cases}
	\alpha(i,\ell,k), &\text{ if }\ell=0,\\
	0 \ &\text{ if } \ell=N \text{ or }\ell=N^2+1-N,\\
	1\ &\text{ if } \ell\notin\{N,N^2+1-N\} \text{ and }\ell \text{ is even,}\\
	2\ &\text{ if } \ell\notin\{N,N^2+1-N\} \text{ and }\ell \text{ is odd}.
	\end{cases}
	$$
	Since $$p_{a^{(N^2+1)k+\ell}b^{2i}}=\begin{cases}
	1 &\text{ if }\ell \text{ is odd}\\
	2 &\text{ if }\ell \text{ is even,}\\
	\end{cases}$$
	for $i$, $k$ as above, and for $0<\ell<N^2+1$,
	we see that $p$ defines a proper coloring in $\mathcal{C}_3$.

	\begin{figure}
		\centering
		\includegraphics[]{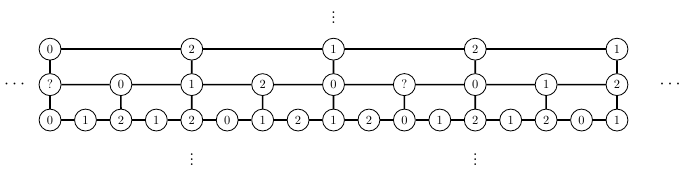}
		\caption{Nodes with the symbol ``?'' have freedom of choosing between a symbol ``1'' or ``2'', since all their neighbors have ``0''s in them}
	\end{figure}
	
	The above shows that
	$$
	|\mathcal{L}_{R_{2m}}(\mathcal{C}_3)|\ge 2^{m\left\lfloor \frac{N^{2m}}{N^2+1}\right\rfloor },
	$$
	and hence
	\begin{align*}
	\frac{1}{|R_{2m}|}\log |\mathcal{L}_{R_{2m}}(\mathcal{C}_3)|&\ge\frac{m\left\lfloor \frac{N^{2m}}{N^2+1}\right\rfloor}{2mN^{2m}}\log(2) \\
	&\ge \frac{\frac{N^{2m}}{N^2+1}-1}{2N^{2m}}\log(2).
	\end{align*}
	Finally, taking limit as $m\to\infty$ we obtain
	$$
	\htop(\mathcal{C}_3)\ge \frac{1}{2(N^2+1)}\log(2)>0.
	$$
\end{proof}	

\section{Further questions and possible generalizations}
\label{section: further questions}

It is interesting to think about how the properties from Section \ref{section:graph_coloring_subshifts} for graph-coloring subshifts in $\BS$ generalize to the more fundamental context of ascending HNN-extensions $G*_{\psi}$ of finitely generated abelian groups. The answers will now probably be separated in two cases depending on the nature of the Cayley graph of the group $G$ with respect to the chosen generating set, depending on whether it is bipartite or not. Results will also depend on the value of the index $|G:\psi(G)|$, since this number will play a role in the length of the new cycles that arise in the Cayley graph of $G*_{\psi}$, in contrast with those found in $G$.

The method used to find the topological entropy bounds for GCS in $\BS$ was counting patterns using rectangles, and thus should be able to be replicated to some extent for $G*_{\psi}$, using the natural generalizations of rectangles to this group. 
\begin{problem*}
	How do the topological entropy related results from Theorem \ref{thm:summary_gcs_entropy} generalize to GCS in $G*_{\psi}$?
\end{problem*}
With respect to frozen colorings, Proposition \ref{prop:nofroz_graph} was fundamental for proving the absence of such configurations for $n\ge 4$ and it will play the same role in the case of ascending HNN-extensions $G*_{\psi}$ of a finitely generated abelian group, by using the generalization of rectangles to these groups. On the other hand, the existence of frozen $3$-colorings shown in Theorem \ref{thm:existence_frozen_3} was somehow involved and it does not generalize immediately to the case of $G*_{\psi}$.
\begin{problem*}
	Find explicit constructions of frozen $3$-colorings on $G*_{\psi}$.
\end{problem*}

A final remark is that most of the results obtained in Section \ref{section:graph_coloring_subshifts} depend only on the number of colors $n$, and not on the parameter $N$ of $\BS$. More generally, we ask the following.
\begin{problem*}
	Consider a finitely generated abelian group $G$, two different (non-trivial, non-surjective) monomorphisms $\psi, \psi':G\to G$, and the corresponding ascending HNN-extensions $G*_{\psi}$ and $G*_{\psi'}$. For $n\ge 3$, what are the dynamical differences between the corresponding GCS associated to $G*_{\psi}$ and $G*_{\psi'}$? In particular, do they have equal topological entropy?
\end{problem*}

	\textbf{Acknowledgements}. I would like to thank my Master's advisor Michael Schraudner for his continuous support, and Nathalie Aubrun for her helpful comments and ideas. This work was developed as part of a Master's degree thesis at Universidad de Chile, partially supported by CONICYT-PFCHA/Mag\'ister Nacional/2019 - 22190176. 
	
\bibliographystyle{plain}
\bibliography{bibliography}{}
\end{document}